\documentclass{article}
\usepackage[a4paper, total={6in, 8in}]{geometry}
\usepackage{authblk}
\usepackage{version}
\usepackage{lmodern}
\usepackage[utf8]{inputenc}
\usepackage[T1]{fontenc}
\usepackage[parfill]{parskip}
\usepackage{float}
\usepackage{amsfonts}
\usepackage{amsmath}
\usepackage{amssymb}
\usepackage{amsthm}
\usepackage{accents}
\usepackage{stmaryrd}
\usepackage{mathtools}
\usepackage{enumitem}
\usepackage{hyperref}
\usepackage[capitalise]{cleveref}
\usepackage{amsthm}
\usepackage{thmtools}

\newcommand{\calH}{\mathcal{H}}
\newcommand{\calE}{\mathcal{E}}
\newcommand{\pth}[1]{\!\left( #1 \right)}

\newcommand{\bigO}[1]{\mathcal{O}\pth{#1}}
\newcommand{\bigTheta}[1]{\Theta\pth{#1}}
\newcommand{\bigOmega}[1]{\Omega\pth{#1}}

\newtheorem{theorem}{Theorem}
\newtheorem{proposition}[theorem]{Proposition}
\newtheorem{lemma}[theorem]{Lemma}
\newtheorem{corollary}[theorem]{Corollary}
\newtheorem{claim}{Claim}[theorem]
\newtheorem{conjecture}[theorem]{Conjecture}

\newtheorem{problem}{Problem}

\newenvironment{subproof}[1][\proofname]{%
	\begin{proof}[#1]%
	}{%
	\end{proof}%
}

\title{Frugal colourings of graphs via sparse hypergraph colouring}
\author[1]{Quentin Chuet}
\affil[1]{LISN, Université Paris-Saclay, Gif-sur-Yvette, France.}
\date{}

\begin{document}
\maketitle

\begin{abstract}
	A proper colouring of a graph $G$ is $\beta$-frugal if every colour appears at most $\beta$ times in the neighbourhood of each vertex. Let $\chi_\beta(G)$ denote the minimum number of colours needed for a $\beta$-frugal colouring of $G$. For a fixed value of $\beta$, Hind et al. showed that $\chi_\beta(G) = \mathcal{O}(\Delta(G)^{1 + 1/\beta})$, and a construction of Alon certifies the tightness of this upper bound up to a constant factor. We show that, for all fixed $\beta \ge 2$ and $t\ge 2$, if $G$ does not contain $C_{2t}$ as a subgraph, or if $G$ does not contain $K_{\beta,t}$ as a subgraph, then $\chi_\beta(G) = \mathcal{O}(\Delta(G)^{1 + 1/\beta} / (\log\Delta(G))^{1/\beta})$. Furthermore, we show that these upper bounds are tight up a constant factor due to the existence of graphs $G$ with arbitrarily large maximum degree $\Delta$ and girth such that $\chi_\beta(G) = \Omega(\Delta^{1 + 1/\beta} / (\log\Delta)^{1/\beta})$. The upper bounds are obtained via a sparse hypergraph colouring theorem of Li and Postle. 
\end{abstract}

\section{Introduction}

\subsection{Background}
We say that a graph is $H$-free if it does not contain $H$ as a (not necessarily induced) subgraph. Johansson \cite{johansson1996choice} showed that triangle-free graphs of maximum degree $\Delta$ have chromatic number $\bigO{\Delta / \log\Delta}$, and the leading constant was later refined by Molloy \cite{molloy2019list}.

\begin{theorem}[Molloy, 2019]\label{thm:molloy}
	Let $G$ be a triangle-free graph of maximum degree $\Delta$. Then \[\chi(G) \le (1 + o(1))\frac{\Delta}{\log \Delta}.\]
\end{theorem}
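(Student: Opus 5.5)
The plan is to follow Molloy's entropy-compression argument for list colouring, which in fact gives the stronger statement that every triangle-free $G$ of maximum degree $\Delta$ satisfies $\chi_\ell(G) \le (1+\varepsilon)\Delta/\log\Delta$ for every fixed $\varepsilon>0$ and $\Delta$ large enough. Fix $\varepsilon$ and let each vertex $v$ have a list $L(v)$ of $q=(1+\varepsilon)\Delta/\log\Delta$ colours. We work with \emph{partial} colourings in which each vertex is either coloured from its list or labelled \emph{blank}. For a partial colouring $\sigma$, let $L_\sigma(v)$ be the colours of $L(v)$ not used on $N(v)$. For each colour $c \in L_\sigma(v)$, let $T_\sigma(v,c)$ be the number of uncoloured neighbours $u$ of $v$ with $c\in L_\sigma(u)$.

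A partial colouring is \emph{good* for $v$} if
\[
|L_\sigma(v)| \ge L := \Delta^{\varepsilon/2}
\qquad\text{and}\qquad
\sum_{c\in L_\sigma(v)} T_\sigma(v,c) \le \tfrac{1}{10}\,L\,|L_\sigma(v)|.
\]
If $\sigma$ is good for every vertex, we finish greedily in a randomized way. The uncoloured vertices are completed by a standard application of the Lovász Local Lemma (or a simple counting of the average degree in the residual list-colouring instance). Each uncoloured vertex keeps at least $L$ colours, and on average each colour is shared with at most $L/10$ uncoloured neighbours. The known lemma (Reed–Sudakov / Haxell type) that lists of size $\ge$ ten times the maximum colour-degree admit a proper choice then applies after discarding the heavy colours. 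This reduces the theorem to finding a partial colouring good for every vertex.

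To find such a $\sigma$ we use a uniformly random \emph{proper partial colouring} (equivalently, a hard-core type measure). The key local computation is that, conditioned on the colouring outside $N(v)$, the neighbours of $v$ form an independent set by triangle-freeness. So their colours are conditionally independent: each $u\in N(v)$ receives a uniformly random element of $L'(u)\cup\{\text{blank}\}$, where $L'(u)$ are the colours available given the outside.

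From this independence one computes two estimates. First, the expected number of colours $c$ that survive at $v$ is about $\sum_c \prod_{u}(1-1/(|L'(u)|+1))$. Second, the expected value of $\sum_c T(v,c)$ is controlled. By convexity (AM–GM on $\log$ of the survival probabilities, exactly as in Molloy), if $\sum_c T$ were large then the number of surviving colours would be exponentially large in $\Delta/q$. With $q=(1+\varepsilon)\Delta/\log\Delta$ this forces $|L_\sigma(v)|\ge \Delta^{\varepsilon/2}$ with high conditional probability. Concentration (Chernoff/Talagrand on independent neighbour choices) then makes the probability that $v$ is bad at most $\Delta^{-10}$.

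Rather than the local lemma on this non-product measure, we run Molloy's entropy-compression algorithm. Repeatedly pick a bad vertex and resample the colours of its closed neighbourhood according to the conditional distribution. Each step's outcome is recorded compactly using the fact that the bad event has probability at most $\Delta^{-10}$. This gives a record that is shorter than the random bits consumed, so the algorithm must terminate with a colouring good everywhere.

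The main obstacle is the expectation-and-concentration computation in the third paragraph: relating $\sum_c T(v,c)$ to the number of surviving colours via the convexity inequality, sharply enough to get the constant $1+o(1)$ rather than just $O(1)$. Here I would follow Molloy's inequality essentially verbatim, bounding $\mathbb{E}[\sum_c T]$ by $\sum_c \sum_u 1/(|L'(u)|+1)$. I would then apply Jensen to $\prod_u(1-1/(|L'(u)|+1)) \ge \exp(-(1+o(1))\sum_u 1/(|L'(u)|+1))$.
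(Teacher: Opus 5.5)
The paper does not prove this theorem: it is quoted from Molloy as background, so the only benchmark is Molloy's own argument. Your scaffolding matches his. That includes partial colourings with blanks, $L_\sigma$ and $T_\sigma$, conditions (a) and (b) with $L=\Delta^{\varepsilon/2}$, triangle-freeness making the choices on $N(v)$ independent given the rest, and entropy compression. However, the step you yourself call the crux is planned in a way that cannot work.

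\textbf{The bound for (b).} Bounding $\mathbb{E}\bigl[\sum_{c\in L_\sigma(v)}T_\sigma(v,c)\bigr]$ by $\sum_c\sum_u 1/(|L'(u)|+1)$ discards the indicator $c\in L_\sigma(v)$. What remains can be about $\Delta$, since each neighbour contributes at most $1$. But (b) asks for $\sum T\le\frac{L}{10}|L_\sigma(v)|$. When every $L'(u)$ equals $L(v)$, one has $|L_\sigma(v)|\approx qe^{-\Delta/q}=\Theta(\Delta^{\varepsilon/(1+\varepsilon)}/\log\Delta)$, so the right-hand side is only $\Delta^{O(\varepsilon)}$. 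No convexity inequality closes a factor $\Delta^{1-O(\varepsilon)}$.

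Your heuristic ``if $\sum_c T$ were large then the number of surviving colours would be exponentially large'' also runs the wrong way. A larger value of $\sum_c\sum_u 1/(|L'(u)|+1)$ only weakens the Jensen lower bound on $|L_\sigma(v)|$.

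The missing idea is to keep survival and $T$ paired colour by colour. Conditional on $c\in L_\sigma(v)$, each $u\in N(v)$ with $c\in L'(u)$ is uniform on $(L'(u)\setminus\{c\})\cup\{\text{blank}\}$, hence blank with probability $1/|L'(u)|$. Therefore
\[
\mathbb{E}\Bigl[\sum_{c\in L_\sigma(v)}T_\sigma(v,c)\Bigr]=\sum_{c\in L(v)}\Pr[c\in L_\sigma(v)]\,\kappa_c,
\qquad
\kappa_c:=\sum_{u\in N(v):\,c\in L'(u)}\frac{1}{|L'(u)|}.
\]
Colours with $\kappa_c\le L/20$ contribute at most $\frac{L}{20}\mathbb{E}|L_\sigma(v)|$. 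Since $\Pr[c\in L_\sigma(v)]\le e^{-\kappa_c/2}$, colours with $\kappa_c>L/20$ contribute at most $q\max_{\kappa>L/20}\kappa e^{-\kappa/2}=o(1)$. This gives $\mathbb{E}\sum T\le\frac{L}{20}\mathbb{E}|L_\sigma(v)|+o(1)$, which is what (b) needs.

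\textbf{The bound for (a).} Your inequality $\prod_u\bigl(1-\frac{1}{|L'(u)|+1}\bigr)\ge\exp\bigl(-(1+o(1))\sum_u\frac{1}{|L'(u)|+1}\bigr)$ is false when some $|L'(u)|$ is bounded: for $|L'(u)|=1$ the factor is $\frac12<e^{-1/2}$. This can happen, because colours used outside $N(v)$ can make $L'(u)$ tiny. Any constant-factor loss in this exponent destroys the $1+\varepsilon$, so you need the exact fix. Use $\frac{k}{k+1}\ge e^{-1/k}$, which gives $\Pr[c\in L_\sigma(v)]\ge e^{-\kappa_c}$ with the same $\kappa_c$ as above. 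Since $\sum_{c\in L(v)}\kappa_c=\sum_u|L'(u)\cap L(v)|/|L'(u)|\le\Delta$, Jensen then yields $\mathbb{E}|L_\sigma(v)|\ge qe^{-\Delta/q}$ with no loss.

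\textbf{Smaller points.}
\begin{enumerate}
\item After discarding colours with $T>L/5$, (b) leaves lists of size at least $L/2$ with colour-degree at most $L/5$. That suffices for Haxell's ratio-$2$ theorem, but not for the ratio-$10$ lemma you quote. Alternatively, apply the asymmetric local lemma directly to (a) and (b).
\item Resample $N(v)$, not $N[v]$, so that the resampling law is exactly the product measure you analysed.
\item Concentration is not a routine Chernoff or bounded-differences bound. The deviations to be excluded are of order $\Delta^{O(\varepsilon)}\ll\sqrt{\Delta}$, and $\sum T$ has large Lipschitz constants.
\end{enumerate}
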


This general upper bound is tight up to a constant factor, as certified by the existence of graphs with arbitrarily large maximum degree and girth that require at least half as many colours. Such graphs were first obtained by Bollobás \cite[Corollary 4]{bollobas1981independence} by analysing random graphs, and there are no known deterministic constructions with a comparably large chromatic number in terms of the maximum degree.

\begin{theorem}[Bollobás, 1981]\label{thm:chromatic-girth}
	For every $\Delta \ge 2$ and $g \ge 3$, there exists a graph of maximum degree $\Delta$ and girth $g$ such that \[\chi(G) > \frac{\Delta}{2\log\Delta}.\]
\end{theorem}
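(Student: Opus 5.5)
The plan is the classical probabilistic argument. We take a random $d$-regular graph, or a binomial random graph $G(n,p)$ with $p = d/n$, and show three things: with positive probability it has small independence number, few short cycles, and nearly uniform degrees. We then delete a few vertices to destroy the short cycles and to cap the maximum degree at $\Delta$. Since $\chi(G) \ge |V(G)|/\alpha(G)$, it suffices to produce a graph of maximum degree $\Delta$ and girth at least $g$ on $n'$ vertices with $\alpha < 2 n' \log\Delta/\Delta$ (up to the exact constant).

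First, the independence number. In $G(n,p)$ with $p = c/n$ for a suitable constant $c$ (a constant fraction of $\Delta$, say $c = \Delta/2$ or so), we use a first moment bound. The expected number of independent sets of size $k$ is at most
\[\binom{n}{k}(1-p)^{\binom{k}{2}} \le \left(\frac{en}{k} e^{-p(k-1)/2}\right)^k.\]
For $k = \frac{2\log c}{c} n (1+\epsilon)$ this tends to $0$. The calculation is routine: $\frac{en}{k} \approx \frac{ec}{2\log c}$ and $e^{-pk/2} \approx c^{-(1+\epsilon)}$.

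Second, the short cycles. The expected number of cycles of length less than $g$ is at most $\sum_{\ell < g} c^\ell$, which is $O(1)$ in $n$. So by Markov, with probability at least $1/2$ there are at most $2c^g$ of them, and we delete one vertex from each.

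Third, the maximum degree. The number of vertices with degree larger than $\Delta$ (with $\Delta$ a suitable multiple of $c$, e.g.\ $c = \Delta/(1+\delta)$) has expectation $n\cdot \Pr[\mathrm{Bin}(n,c/n) > \Delta]$. By Chernoff this is $n e^{-\Omega(\delta^2 c)}$, a small fraction of $n$, and we delete those vertices too. We then intersect the events via a union/Markov bound and obtain a graph on $n' \ge n/2$ vertices with girth at least $g$, maximum degree at most $\Delta$, and $\alpha \le (2+\epsilon')\frac{\log\Delta}{\Delta} n$.

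This gives $\chi \ge n'/\alpha \gtrsim \Delta/(4\log\Delta)$, which is weaker than claimed. The main obstacle is the constant $1/2$. For it we must use the sharp $\alpha(G(n,d/n)) \sim \frac{2\log d}{d} n$, from Frieze's second-moment/martingale result, or the Bollobás configuration-model analysis for random $\Delta$-regular graphs. We also must ensure the deletions remove only an $o(n)$ fraction, so that $n' = (1-o(1))n$. Working directly with random $\Delta$-regular graphs via the configuration model handles the degree condition exactly. There the expected number of cycles of length below $g$ is bounded, so deleting them costs $O(1)$ vertices, and the first-moment count of independent sets in the pairing model gives $\alpha \le (1+o(1))\frac{2\log\Delta}{\Delta}n$ for large $\Delta$. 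This yields $\chi > \Delta/(2\log\Delta)$ for $\Delta$ large. For small $\Delta$ the bound is trivial, since any graph with an edge needs two colours and $\Delta/(2\log\Delta) < 2$ there, or we handle it by taking odd cycles or small cases. If a graph with maximum degree below $\Delta$ results, we pad it by attaching disjoint high-girth trees to reach maximum degree exactly $\Delta$ without changing girth or lowering $\chi$.
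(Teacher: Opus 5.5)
The paper gives no proof of this statement. It is quoted from Bollob\'as, and his argument is essentially the one you end with: a random $\Delta$-regular graph in the pairing model, a first-moment bound on independent sets, and a bounded expected number of cycles of length less than $g$. Your initial $\mathbb{G}(n,d/n)$-plus-deletion scheme is what the paper uses for its frugal analogue, Theorem~\ref{thm:frugal-girth}. There, keeping only $n/2$ vertices and capping degrees at $10d$ is harmless because $c_\beta$ is not sharp.

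So the route is right, but as written it does not prove the statement, whose whole content is the exact constant $1/2$ for every $\Delta$. There are two gaps.
\begin{enumerate}
\item A bound $\alpha\le(1+o(1))\frac{2\log\Delta}{\Delta}n$ yields only $\chi\ge(1-o(1))\frac{\Delta}{2\log\Delta}$. It never gives the strict inequality, however large $\Delta$ is.
\item Your cases do not meet. A single edge (a star $K_{1,\Delta}$) works only for $\Delta\le 8$, and an odd cycle plus a disjoint star only for $\Delta\le 16$. The asymptotic argument starts at an unspecified ``large'' $\Delta_0$, so $17\le\Delta<\Delta_0$ is left open.
\end{enumerate}
Incidentally, Frieze's theorem is the \emph{lower} bound on $\alpha(\mathbb{G}(n,d/n))$ and plays no role here. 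The factor $2$ you lost in the binomial version came from the crude bookkeeping $n'\ge n/2$, not from the estimate on $\alpha$.

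Both gaps close if you do the pairing-model first moment non-asymptotically in $\Delta$ and keep the lower-order term. A fixed set of $k=an$ vertices is independent with probability $(\Delta n-\Delta k)_{\Delta k}\,M(\Delta n-2\Delta k)/M(\Delta n)$. Here $(m)_j$ is the falling factorial and $M(2m)$ is the number of perfect matchings of $2m$ points. Writing $H(a)=-a\log a-(1-a)\log(1-a)$, Stirling gives
\[
\frac{1}{n}\log\mathbb{E}\bigl[\#\{\text{independent $an$-sets}\}\bigr]\longrightarrow H(a)+\Delta\Bigl[(1-a)\log(1-a)-\tfrac{1-2a}{2}\log(1-2a)\Bigr]\le a\Bigl[\log\frac{e}{a}-\frac{\Delta a}{2}\Bigr].
\]
The inequality uses $H(a)\le a\log(e/a)$ and the fact that the bracket is at most $-a^2/2$. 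To see the latter, note that the bracket plus $a^2/2$ vanishes at $a=0$ and has derivative $\log\frac{1-2a}{1-a}+a\le 0$.

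At $a=\frac{2\log\Delta}{\Delta}$ the right-hand side equals $a(1-\log 2-\log\log\Delta)$. This is negative for $\Delta\ge 4$, and $a<1/2$ once $\Delta\ge 9$. So for every fixed $\Delta\ge 9$ and any fixed $C$, with high probability there is no independent set of size $\frac{2\log\Delta}{\Delta}(n-C)$.

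Combine this with Markov's inequality on the number of cycles of length less than $g$, counting loops and multiple edges, and delete one vertex from each. You get a simple graph on $n'\ge n-C$ vertices with maximum degree $\Delta$, girth at least $g$, and $\chi\ge n'/\alpha>\Delta/(2\log\Delta)$. Together with the trivial range $\Delta\le 8$, this covers every $\Delta\ge 2$.
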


Alon, Krivelevich, and Sudakov \cite{alon1999coloring} generalised Johansson's theorem on triangle-free graphs to \emph{locally sparse} graphs, that is graphs where every neighbourhood spans $o(\Delta^2)$ edges.

\begin{theorem}[Alon, Krivelevich, Sudakov, 1999]\label{thm:aks}
	Let $G$ be a graph of maximum degree $\Delta$. If there exists $f > 1$ such that every vertex $v$ is incident to at most $\Delta^2/f$ triangles, then \[\chi(G) = \bigO{\frac{\Delta}{\log f}}.\]
\end{theorem}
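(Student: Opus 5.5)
The plan is to follow the nibble/entropy-compression line of Molloy, applied to a sparsified version of $G$. Up to constants, the bound is only interesting when $f$ is large, say $f \ge f_0$ for a suitable constant $f_0$. When $f < f_0$, greedy colouring gives $\chi(G) \le \Delta + 1 = \bigO{\Delta/\log f}$, so we may assume $f \ge f_0$. We may also assume $f \le \Delta^2$, since otherwise $G$ is triangle-free and \cref{thm:molloy} applies directly.

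The main step is to reduce to a graph whose neighbourhoods are triangle-free or nearly so. I would use random vertex partitioning. Split $V(G)$ uniformly at random into $k \approx \Delta/\sqrt{f}$ parts, which we may take to be at least $1$, and let $G_i$ be the induced subgraph on part $i$. By Chernoff bounds and the Lovász Local Lemma, each $G_i$ then has maximum degree $\Delta' = \bigO{\sqrt f}$. The number of triangles at each vertex drops to about $\Delta^2/(fk^2) = \bigO{1}$. More precisely, the expected number is $\bigO{\Delta'^2/f}$, which is at most $\Delta'^2/f^{\Omega(1)}$ in the $G_i$ scale.

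We then colour each $G_i$ with disjoint palettes, for a total of $k\cdot \bigO{\Delta'/\log \Delta'}$ colours. Since $\log \Delta' = \Theta(\log f)$, this gives $\bigO{\Delta/\log f}$. To colour the $G_i$, we need a version of \cref{thm:molloy} that tolerates a few triangles: a graph of maximum degree $D$ in which every vertex lies in at most $D^{2-\varepsilon}$ triangles (here with $D = \Delta'$) has $\chi = \bigO{D/\log D}$. Alternatively, we can delete one edge from each triangle. This is done inside each $G_i$ only for the few offending edges, and the deleted edges are then recoloured using a small extra palette.

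The main obstacle is exactly this near-triangle-free colouring step. A plain Local Lemma concentration does not give a constant number of triangles at every vertex, and the deleted edges may form a graph of unbounded degree. I would handle it by rerunning Molloy's entropy-compression argument directly. In that argument, triangle-freeness is used only to ensure that, for each vertex $v$, the neighbours in $N(v)$ receive roughly independent colour lists. With at most $D^2/f$ triangle edges in $N(v)$, a counting (Markov) argument shows that all but a vanishing fraction of neighbours have few neighbours inside $N(v)$. The usual expected-list-size computation then goes through, with an error term absorbed by losing a constant factor, which yields $\bigO{\Delta/\log f}$.
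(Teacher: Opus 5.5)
The paper does not prove this theorem; it quotes it from \cite{alon1999coloring} as background, so your sketch has to stand on its own. The outer layer is reasonable in outline: shrinking degrees to $\Delta' = f^{\Theta(1)}$ by random partitioning and using disjoint palettes costs $k\cdot\bigO{\Delta'/\log\Delta'} = \bigO{\Delta/\log f}$. However, as you observe, partitioning cannot make the number of triangles at every vertex bounded. A single $u\in N(v)$ adjacent to most of $N(v)$ lands in $v$'s part with probability $1/k$, and then creates $\Theta(\Delta')$ triangles at $v$. So everything rests on the lemma that a graph of maximum degree $D$ with at most $D^{2-\varepsilon}$ triangles at each vertex has $\chi = \bigO{D/\log D}$. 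That lemma is exactly the theorem itself in the regime $f = D^{\varepsilon}$, and you do not prove it.

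Molloy's key step conditions on the colouring outside $N(v)$ and uses that the vertices of $N(v)$ are then coloured \emph{exactly} independently, because $N(v)$ is an independent set. This independence drives two estimates: the lower bound on the expected number of colours of $L_v$ missing from $N(v)$, and the Chernoff-type concentration for the number of uncoloured neighbours whose available lists contain a given colour. Once $G[N(v)]$ has edges, the conditional law on $N(v)$ is a uniform proper partial list-colouring of $G[N(v)]$, not a product measure. Your Markov observation that most neighbours have few neighbours inside $N(v)$ does nothing to restore independence. If $G[N(v)]$ is a perfect matching, every neighbour has only one neighbour inside $N(v)$, yet each is correlated with its partner. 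To close the gap you would have to quantify how the (up to $D^{2-\varepsilon}$) edges inside $N(v)$ perturb both estimates. That is a genuine extension of Molloy's method, and the sentence ``the usual computation goes through, with an error absorbed in a constant factor'' asserts precisely what needs proof.

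There are also smaller problems in the reduction.
\begin{itemize}
\item A single random partition with the Local Lemma does not give $\Delta' = \bigO{\sqrt f}$ when $f$ is small compared with $\log^2\Delta$. The Chernoff tail $e^{-\Omega(\sqrt f)}$ cannot beat a dependency degree polynomial in $\Delta$, and the same holds for the triangle counts. You need the standard iterated-halving version of the partition, not a single round.
\item The fallback of deleting one edge per triangle and recolouring ``with a small extra palette'' does not work additively, because the endpoints of deleted edges can be almost all vertices. The right device is a product colouring. It costs a factor $\Delta(F)+1$ for the deleted graph $F$, so it is useless unless $\Delta(F) = \bigO{1}$, which you rightly note cannot be ensured.
\item For $f > \Delta^2$, \cref{thm:molloy} gives $\bigO{\Delta/\log\Delta}$, not $\bigO{\Delta/\log f}$. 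The statement has to be read with $f \le \Delta^2+1$, as in the original. For a triangle-free graph the hypothesis holds for every $f>1$, and letting $f\to\infty$ would force $\chi(G) < 1$.
\end{itemize}
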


In particular, using bounds on the density of $K_{t,t}$-free graphs, they showed that for any fixed $t\ge 1$, every $K_{1,t,t}$-free graph of maximum degree $\Delta$ has chromatic number $\bigO{\Delta / \log\Delta}$. In general, if $H$ is a fixed graph and $G$ is $H$-free, it is known that $\chi(G) = \bigO{\Delta \log\log\Delta / \log\Delta}$ \cite{johansson1996choice,molloy2019list}. Alon, Krivelevich, and Sudakov conjectured that the $\bigO{\log\log\Delta}$ factor can be removed.

\begin{conjecture}[Alon, Krivelevich, Sudakov, 1999]\label{conj:aks}
	Let $H$ be a fixed graph. If $G$ is a $H$-free graph of maximum degree $\Delta$, then \[\chi(G) = \bigO{\frac{\Delta}{\log\Delta}}.\]
\end{conjecture}

The first advancement towards proving \cref{conj:aks} since it was posed was made recently by Dhawan, Janzer, and Methuku \cite{dhawan2025independent}, who proved the conjecture when $H = K_{t,t,t}$, with $t \ge 1$. The first open case of \cref{conj:aks} is $H = K_4$.

\subsection{Frugal colouring}
We consider an analogue of \cref{conj:aks} for a variant of proper colouring introduced by Hind, Molloy, and Reed \cite{hind1997colouring}. A proper colouring of a graph $G$ is $\beta$-frugal if every colour appears at most $\beta$ times in the neighbourhood of each vertex. The minimum number of colours needed for a $\beta$-frugal colouring of $G$ is denoted $\chi_\beta(G)$. Throughout this paper, we consider the frugality $\beta$ to be a fixed constant; for results concerning $\beta$-frugal colourings where $\beta$ depends on the maximum degree, we refer to \cite{molloy2010asymptotically}. Hind, Molloy, and Reed proved the following bounds.

\begin{theorem}[Hind, Molloy, Reed, 1997]\label{thm:hmr}
	Fix $\beta \ge 1$. Let $G$ be a graph of maximum degree $\Delta$ sufficiently large. Then 
	\[\chi_\beta(G) \le \frac{e^3}{\beta}\Delta^{1 + 1/\beta}.\]
	Furthermore, for arbitrarily large $\Delta$, there exists a bipartite graph $G$ of maximum degree $\Delta$ such that $\chi_\beta(G) \ge \Delta^{1 + 1/\beta}/2\beta$.
\end{theorem}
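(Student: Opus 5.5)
The upper bound will come from a random colouring and the symmetric Lovász Local Lemma, and the lower bound from an explicit bipartite construction in which any $\beta$-frugal colouring is forced to use many colours. Throughout, $\beta$ is fixed and $\Delta$ is large.

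\textbf{Upper bound.} Set $C = \frac{e^3}{\beta}\Delta^{1+1/\beta}$ and give each vertex a colour chosen uniformly and independently from $[C]$. There are two kinds of bad events.
\begin{itemize}
\item[(A)] For an edge $uv$, let $A_{uv}$ be the event that $u$ and $v$ receive the same colour. Then $\Pr[A_{uv}] = 1/C$.
\item[(B)] For a vertex $v$ and a set $S$ of $\beta+1$ neighbours of $v$, let $B_{v,S}$ be the event that all vertices of $S$ receive the same colour. Then $\Pr[B_{v,S}] = C^{-\beta}$.
\end{itemize}
There are at most $\binom{\Delta}{\beta+1} \le (e\Delta/(\beta+1))^{\beta+1}$ sets $S$ per vertex. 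An event is determined by the colours on its vertex set, so it depends only on events whose vertex sets intersect its own. Each vertex $w$ lies in at most $\Delta$ events of type (A). It lies in at most $\Delta\binom{\Delta-1}{\beta}$ events of type (B), since we choose the centre $v$ among the neighbours of $w$ and then the remaining $\beta$ vertices of $S$ among the neighbours of $v$. An event has at most $\beta+1$ vertices, so it depends on $D = O(\Delta^{\beta+1})$ other events. This crude count does not work, because the type (B) probabilities are only of order $\Delta^{-\beta-1}$ up to constants.

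I would therefore use the asymmetric local lemma, with weights $x_A = 2/C$ for type (A) events and $x_B = 2C^{-\beta}$ for type (B) events. For a type (B) event, the required product is
\[
\prod(1-x) \ge \exp\!\Big(-O\big((\beta+1)\Delta\cdot C^{-1}\big) - (\beta+1)\,\Delta\tbinom{\Delta-1}{\beta}\cdot 2C^{-\beta}\Big).
\]
Now $\Delta^{\beta+1}C^{-\beta} = \Delta^{\beta+1}\,\beta^\beta e^{-3\beta}\,\Delta^{-\beta-1}$. This is a constant, and it is small thanks to the factor $e^{-3\beta}$ together with $\binom{\Delta}{\beta} \le (e\Delta/\beta)^\beta$: the exponent is at most $2(\beta+1)(e/\beta)^\beta\beta^\beta e^{-3\beta}/\beta! \le 1/2$ roughly. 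The product is then at least $e^{-1}$, so $x_B\prod(1-x) \ge 2e^{-1}C^{-\beta} \ge \Pr[B]$. Type (A) events are handled in the same way. Choosing the constant $e^3/\beta$ precisely is the main bookkeeping obstacle. If the constants fail, I would enlarge the weights appropriately, or first show the result with a slightly larger constant and then tune it.

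\textbf{Lower bound.} Take a bipartite graph with one part $X$ of $n$ vertices and another part $Y$ consisting of one vertex for every $(\beta+1)$-subset of $X$, joined to that subset. Alternatively, to control degrees, use a design-like system: let $X$ be the point set and $Y$ a family of $(\beta+1)$-subsets such that every $(\beta+1)$-set of $X$ lies in one neighbourhood.

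The cleanest version reverses the roles. Let $Y$ be a set of centres, each adjacent to $\Delta$ points of $X$, such that every $(\beta+1)$-subset of $X$ is contained in the neighbourhood of some centre. In a $\beta$-frugal colouring, no colour class of $X$ may contain $\beta+1$ vertices, so each colour appears at most $\beta$ times on $X$, giving $\chi_\beta \ge |X|/\beta$. We need $|X| \approx \Delta^{1+1/\beta}/2$ while keeping the maximum degree $\Delta$.

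Counting incidences, each $(\beta+1)$-set must be covered: this needs about $|X|^{\beta+1}/\Delta^{\beta+1}$ centres, and each point then has degree about $|X|^\beta/\Delta^\beta$, which is at most $\Delta$ exactly when $|X| \le \Delta^{1+1/\beta}$. So I would take $|X| = \lfloor \Delta^{1+1/\beta}/2 \rfloor$, with $X$ partitioned into $m = |X|/\Delta$ blocks of size $\Delta$ in a suitable family of partitions (for instance a projective-geometry or affine construction), so that any $\beta+1$ points lie together in some block. The centres are the blocks. Verifying the covering property with the correct degrees is the key step: I would realise $X = \mathbb{F}_q^{\beta+1}$-style structures, taking blocks to be the solution sets of suitable polynomial curves of degree $\beta$ through any $\beta+1$ points (interpolation). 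The main obstacle is arranging these so that every point lies in at most $\Delta$ blocks.
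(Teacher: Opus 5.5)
\textbf{The lower bound has a genuine gap.} Your reduction is the right one. Suppose $X \cup Y$ is a bipartite graph of maximum degree $\Delta$ in which every $(\beta+1)$-subset of $X$ lies in the neighbourhood of some vertex of $Y$. Then every colour class meets $X$ in at most $\beta$ vertices, so $\chi_\beta \ge |X|/\beta$.

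However, you never exhibit such a graph. The step you defer as ``the main obstacle'' is the entire content of this half of the theorem. The natural reading of your concrete candidate also fails. Suppose the blocks are graphs $\{(x,f(x))\}$ of polynomials of degree at most $\beta$ over $\mathbb{F}_q$. Interpolation then covers only $(\beta+1)$-sets with distinct abscissae: two points on a vertical line never share a block, so nothing prevents a whole vertical line from being monochromatic. Moreover, for $\beta \ge 2$ the parameters are wrong anyway. You get $|X| = q^2$ with point-degree $q^\beta$, so $|X| = \Delta^{2/\beta}$, far below $\Delta^{1+1/\beta}$.

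The missing idea is to use linear rather than polynomial structure. Take $X$ to be the points and $Y$ the hyperplanes of $PG(\beta+1,q)$, which is the construction the paper credits to Alon. Alternatively use $AG(\beta+1,q)$, whose parallel classes of hyperplanes are exactly the ``family of partitions'' you were reaching for. Any $\beta+1$ points span a subspace of dimension at most $\beta$, hence lie on a common hyperplane. In $PG(\beta+1,q)$, every hyperplane contains exactly $\Delta = (q^{\beta+1}-1)/(q-1) < q^{\beta}\cdot\frac{q}{q-1}$ points, and by duality every point lies on exactly $\Delta$ hyperplanes. Meanwhile $|X| = (q^{\beta+2}-1)/(q-1) > q^{\beta+1}$. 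For $q \ge 4$ this gives $\Delta^{1+1/\beta} < (16/9)\,q^{\beta+1} < 2|X|$. Hence $\chi_\beta \ge |X|/\beta > \Delta^{1+1/\beta}/(2\beta)$, with $\Delta$ arbitrarily large as $q$ ranges over prime powers.

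\textbf{The upper bound is sound in substance, but the arithmetic needs repair.} The asymmetric local lemma with weights $x_E = 2\Pr[E]$ is the standard route to the Hind--Molloy--Reed bound. Two slips need fixing:
\begin{itemize}
\item The inequality $2e^{-1}C^{-\beta} \ge C^{-\beta}$ is false, since $2/e < 1$.
\item Your displayed exponent uses both $\binom{\Delta}{\beta} \le (e\Delta/\beta)^\beta$ and an extra factor $1/\beta!$, which is one bound too many.
\end{itemize}
Use only $\binom{\Delta-1}{\beta} \le \Delta^\beta/\beta!$ instead. Over the dependency neighbourhood of a type (B) event, the weights then sum to at most $2(\beta+1)\beta^\beta/(\beta!\,e^{3\beta}) + o(1) \le 4e^{-3} + o(1) < 0.21$. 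Over that of a type (A) event they sum to at most $4\beta^\beta/(\beta!\,e^{3\beta}) + o(1)$, which is no larger. So every product $\prod(1-x_F)$ exceeds $1/2$, and the local lemma applies with exactly $C = \frac{e^3}{\beta}\Delta^{1+1/\beta}$ colours.
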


We refer to \cite[Theorem 7]{wanless2022general} for a refinement of the upper bound. The construction for the lower bound in \cref{thm:hmr}, credited to Noga Alon, is the point-hyperplane bipartite incidence graph of the projective geometry $PG(\beta+1,q)$. With a more careful analysis, Kang and Müller \cite[Corollary 3.4]{kang2011frugal} showed that the lower bound can be improved to $(1 + o(1))\Delta^{1 + 1/\beta}/\beta$ for all $\Delta \ge 2$.

A simpler construction, appearing in the work of Yuster \cite{yuster1998linear} for the case $\beta = 2$, also certifies a $\bigOmega{\Delta^{1 + 1/\beta}}$ lower bound for $\beta$-frugal colouring of graphs of maximum degree $\Delta$, although is not bipartite and provides a weaker multiplicative constant. Consider the graph $G_n^{\beta+1}$ with vertex set $[n]^{\beta+1}$ and where two vertices are adjacent if they share at least one coordinate. One easily shows that $G_n^{\beta+1}$ has maximum degree $\Delta \sim (\beta+1)n^{\beta}$, and that every $\beta$-frugal colouring of $G_n^{\beta+1}$ uses each colour at most $\beta$ times, thus $\chi_\beta(G_n^{\beta + 1}) \ge n^{\beta+1}/\beta \ge (1 + o(1))\Delta^{1 + 1/\beta} / \beta^{2 + 1/\beta}$. We remark that $G_n^{\beta + 1}$ is isomorphic to the line-graph of the complete $(\beta+1)$-uniform $(\beta+1)$-partite hypergraph with all parts of size $n$; a similar argument works for the line-graphs of other dense $(\beta+1)$-uniform hypergraphs, such as the complete $(\beta+1)$-uniform hypergraph on $n$ vertices.

Note that $1$-frugal colourings correspond exactly to distance-$2$ colourings, that is a colouring where no pair of vertices at distance at most $2$ is monochromatic; it is clear that $\chi_1(G) = \chi(G^2)$. If $G$ has maximum degree $\Delta$, one easily obtains a greedy distance-$2$ colouring using $\Delta^2 + 1$ colours, and this is asymptotically optimal as there exist bipartite graphs of girth $6$ --- namely the bipartite incidence graphs of projective planes --- that require $(1 + o(1))\Delta^2$ colours. Alon and Mohar \cite{alon2002chromatic} showed that graphs of girth at least $7$ require significantly fewer colours for distance-$2$ colouring.

\begin{theorem}[Alon, Mohar, 2002]\label{thm:am-square}
	Let $G$ be a graph of maximum degree $\Delta$ and girth $g \ge 7$. Then \[\chi(G^2) = \bigO{\frac{\Delta^2}{\log\Delta}}.\]
	Furthermore, there exists an absolute constant $c > 0$ such that, for every $\Delta \ge 2$ and $g \ge 3$, there exists a graph $G$ of maximum degree $\Delta$ and girth $g$ such that $\chi(G^2) \ge c\frac{\Delta^2}{\log \Delta}$.
\end{theorem}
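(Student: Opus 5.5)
The statement has two parts, and I will treat them separately: an upper bound for graphs of girth at least $7$, and a matching lower bound attained by graphs of arbitrarily large girth.

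\emph{Upper bound.} The plan is to colour $G^2$ with the ``locally sparse'' machinery of \cref{thm:aks}, applied to $H = G^2$.
\begin{enumerate}
\item The graph $G^2$ has maximum degree $D \le \Delta^2$.
\item I bound, for each vertex $v$, the number of triangles of $G^2$ containing $v$. Equivalently, I bound the number of edges of $G^2$ inside $N_{G^2}(v)$, which is the set of vertices at distance $1$ or $2$ from $v$ in $G$.
\item Take two such vertices $x,y$ that are adjacent in $G^2$. The closed walk formed by a shortest $v$--$x$ path, a shortest $x$--$y$ path and a shortest $y$--$v$ path has length at most $2+2+2 = 6$. Since the girth is at least $7$, this walk contains no cycle, so its union is a tree.
\item In this tree, the paths from $v$ to $x$ and from $v$ to $y$ branch off a common path from $v$. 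Hence $x$ and $y$ are connected through a tree structure rooted near $v$.
\item Counting over the tree shapes, the number of such pairs is $\bigO{\Delta^3}$. For instance, $x$ at distance $2$ and $y$ a neighbour of $x$'s parent gives $\Delta^2\cdot\Delta$ pairs, and a similar count holds for each shape.
\item So each vertex lies in at most $\bigO{\Delta^3} = D^2/f$ triangles of $G^2$, with $f = \bigOmega{\Delta}$.
\item \cref{thm:aks} then gives $\chi(G^2) = \bigO{D/\log f} = \bigO{\Delta^2/\log\Delta}$.
\end{enumerate}
The main care needed is the case analysis in steps 3--5: checking that every configuration of two vertices within distance $2$ of $v$, and within distance $2$ of each other, forces a tree with few degrees of freedom. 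Girth $7$ is exactly what excludes $6$-cycles, which would otherwise allow $\Theta(\Delta^4)$ such pairs, as happens in projective planes.

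\emph{Lower bound.} I would take a random $\Delta$-regular graph, or a random graph $G(n,p)$ with $p = d/n$, and then delete short cycles and high-degree vertices. The aim is to show that $G^2$ has no independent set of size larger than $C n \log\Delta/\Delta^2$.
\begin{enumerate}
\item An independent set in $G^2$ is a set $S$ with no two vertices at distance at most $2$.
\item Fix a set $S$ of size $s$. The number of paths of length $2$ with both ends in $S$ has mean about $s^2 d^2/(2n)$.
\item A Janson-type inequality bounds the probability that no such path exists by $\exp(-\Omega(s^2d^2/n))$. The union bound over the $\binom{n}{s}$ choices of $S$ then costs $\exp(s\log(en/s))$, which is beaten once $s \gtrsim n\log d/d^2$.
\item Removing the few short cycles and the vertices of atypically high degree preserves this bound. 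Taking $n$ large relative to $\Delta$ and $g$ ensures that few short cycles exist.
\item Then $\chi(G^2) \ge n/\alpha(G^2) \ge c\Delta^2/\log\Delta$.
\end{enumerate}
The Janson step is the main technical obstacle. The length-$2$ paths overlap in edges, so the dependency term $\bar\Delta$ of Janson's inequality must be computed and shown not to dominate the mean. If that fails, I would fall back on expansion arguments in the style of Bollobás's proof of \cref{thm:chromatic-girth}, adapted to distance-$2$ independence.
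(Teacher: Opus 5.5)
\paragraph{Upper bound and Janson step.} Your upper bound is correct. Girth at least $7$ makes the union of the three shortest paths a tree, so a pair $x,y\in N_{G^2}(v)$ at distance at most $2$ is pinned down by a median vertex and three branch lengths summing to at most $3$. This gives $\bigO{\Delta^3}$ edges in each $G^2$-neighbourhood, and \cref{thm:aks} applies. If the true maximum degree of $G^2$ is far below $\Delta^2$, add a disjoint star, which creates no triangles, or note that greedy colouring already suffices. The paper only cites this theorem; your AKS argument is the natural $\beta=1$ analogue of its \cref{thm:li-postle} route. Your worry about Janson is unfounded: two cherries overlap only if they share the centre and an endpoint. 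Hence $\bar\Delta\approx s^3d^3/n^2$ and $\bar\Delta/\mu=\bigO{sd/n}=\bigO{\log d/d}$.

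\paragraph{The gap: deletion in step 4.} Deleting vertices does \emph{not} preserve your bound on $\alpha(G^2)$. For $U\subseteq V(G)$, the graph $(G[U])^2$ is in general a proper subgraph of $G^2[U]$. If the only common neighbour of $x,y\in S$ is a deleted vertex (of high degree, or removed to break a short cycle), then $x,y$ are no longer within distance $2$ in the final graph. Step 3 only certifies one cherry per $S$ and says nothing about whether its centre survives. You also cannot union-bound over $U$ as well, since $2^n$ swamps the failure probability $\exp(-\Theta(n(\log d)^2/d^2))$.

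\paragraph{First fix: count centres.} For $w\notin S$, the events ``$w$ has at least two neighbours in $S$'' depend on disjoint edge sets, so they are independent. Their number $Y_S$ therefore has mean $\mu\approx s^2d^2/(2n)$, and Chernoff gives $\pr{Y_S<\mu/2}\le e^{-\mu/8}$. With $s=Cn\log d/d^2$ and $C$ large, this still beats $\binom{n}{s}$, and Janson becomes unnecessary. You then need the deleted set $R$ to satisfy $|R|<\mu/2=\bigTheta{n(\log d)^2/d^2}$, so a bound like $|R|\le n/10$ (which suffices in the paper) is not enough. Instead use $\esp{|R|}\le ne^{-10d}+\sum_{\ell<g}d^\ell$ together with Markov; this works for $d$ large and $n\gg d^{g+1}$.

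\paragraph{Second fix: the paper's route.} Apply \cref{thm:randomgraph} with $\beta=1$, recalling that $\chi_1(G)=\chi(G^2)$. It union-bounds over pairs $(U,\sigma)$ with $|U|\ge n/2$ and $\sigma$ a $k$-colouring of $U$. Its witnesses, a vertex in one monochromatic block $V_i$ with two neighbours in another block $V_j$, lie entirely inside $U$, so the property survives the deletions. The $(k+1)^n$ cost is affordable because the $\binom{k}{2}$ block pairs are independent. This drives the failure probability down to $\exp(-\Theta(nd^2/k))$, which is below $(k+1)^{-2n}$ for $k=c\,d^2/\log d$ with $c$ small.
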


Kang and Pirot \cite{kang2018distance} showed that replacing the girth $7$ requirement in \cref{thm:am-square} with the condition ``$G$ is $C_{2t}$-free, for some fixed $t \ge 3$'' suffices. We extend this result to frugal colourings.

\begin{restatable}{theorem}{cycles}\label{thm:frugal-cycle}
	Fix $\beta\ge 2$ and $t \ge 2$. Let $G$ be a $C_{2t}$-free graph of maximum degree $\Delta$. Then \[\chi_\beta(G) = \bigO{\frac{\Delta^{1 + 1/\beta}}{(\log\Delta)^{1/\beta}}}.\]
\end{restatable}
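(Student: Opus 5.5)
The plan is to encode $\beta$-frugal colourings of $G$ as proper colourings of an auxiliary hypergraph, and then apply the Li--Postle sparse hypergraph colouring theorem mentioned in the abstract. Define $\calH$ on vertex set $V(G)$ with two kinds of edges. The $2$-edges are the edges of $G$. The $(\beta+1)$-edges are the sets $S$ of $\beta+1$ vertices that lie together in the neighbourhood $N(v)$ of some vertex $v$. A colouring of $V(G)$ is $\beta$-frugal exactly when it is proper for $G$ and no $(\beta+1)$-edge is monochromatic, that is, when it is a proper colouring of $\calH$. The $2$-edges alone force at most $\Delta$ extra colours, which is negligible. The cleanest route is to apply the Li--Postle theorem to the non-uniform hypergraph directly, since it allows edges of sizes $2$ and $\beta+1$. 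Failing that, I would colour the $(\beta+1)$-uniform part with $k$ colours, then refine each colour class by a proper colouring of $G$ with $\Delta+1$ colours. That product costs a factor $\Delta+1$ and is too lossy, so the direct non-uniform version is the one to use.

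The degree parameters are as follows. Each vertex $u$ lies in at most $\Delta\binom{\Delta-1}{\beta} \le \Delta^{\beta+1}$ edges of size $\beta+1$, because we choose the centre $v\in N(u)$ and then $\beta$ further neighbours of $v$. The Li--Postle theorem, as I recall it, says the following. Let $\calH$ be a $(\beta+1)$-uniform (or rank-$(\beta+1)$) hypergraph with maximum $(\beta+1)$-degree at most $D$, and suppose the codegrees are small: every set of $j$ vertices, $2\le j\le \beta$, lies in at most $D^{(\beta+1-j)/\beta - \epsilon}$ edges. Suppose also the hypergraph is sufficiently sparse, for instance each vertex lies in few short Berge-type configurations. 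Then $\chi(\calH)=\bigO{(D/\log D)^{1/\beta}}$. With $D=\Delta^{\beta+1}$ this gives $(\Delta^{\beta+1}/\log\Delta)^{1/\beta}=\Delta^{1+1/\beta}/(\log\Delta)^{1/\beta}$, which is exactly the claimed bound.

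The main work is verifying the codegree and sparsity hypotheses from $C_{2t}$-freeness. Fix a pair $\{x,y\}$. An edge containing both is determined by a common neighbour $v$ of $x$ and $y$, together with $\beta-1$ further neighbours of $v$. So the codegree is at most $|N(x)\cap N(y)|\cdot\Delta^{\beta-1}$, and we need $|N(x)\cap N(y)|\le \Delta^{1-1/\beta-\epsilon}$. For higher $j$ the count is at most $(\#\text{common neighbours})\cdot\Delta^{\beta+1-j}$, which needs a similar saving. In a $C_{2t}$-free graph, however, two vertices can have up to about $\Delta$ common neighbours only when $t\ge3$; for $t=2$ ($C_4$-free) there is at most one. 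So I would first handle large common neighbourhoods separately. Call a pair \emph{heavy} if $|N(x)\cap N(y)|>\Delta^{1-\delta}$. A Kővári--Sós--Turán or Bondy--Simonovits style counting argument (the same one Kang--Pirot use for distance-$2$ colouring) shows that $C_{2t}$-freeness bounds the number of heavy pairs at each vertex by $\Delta^{1-\delta'}$. Heavy pairs can then be added as $2$-edges, and coloured apart at a cost of only $o(\Delta^{1+1/\beta})$ colours.

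Alternatively, one can restrict the hypergraph to edges inside neighbourhoods that avoid heavy structure. The final step is the Li--Postle sparsity condition (bounded number of cycles or paths through a vertex among edges). This follows because a short Berge cycle in $\calH$ lifts to a closed walk of bounded length in $G$ through many distinct vertices. Counting such walks using $C_{2t}$-freeness, via the bound $e(G[N(v)\cup N^2(v)])$-type estimates of Bondy--Simonovits, gives the needed $\Delta^{-\epsilon}$ saving. I expect this codegree/heavy-pair control, especially for $t\ge3$, to be the principal obstacle.
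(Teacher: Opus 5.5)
Your architecture is the paper's: the rank-$(\beta+1)$ hypergraph with $E(G)$ as $2$-edges and $(\beta+1)$-subsets of neighbourhoods as large edges, Li--Postle applied directly to this non-uniform hypergraph, and extra $2$-edges on pairs with many common neighbours. However, the step you flag as the main obstacle is left unresolved, and as calibrated it fails.

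First, $2$-edges do not enter the codegree hypothesis (a) at all. So adding a heavy pair $\{x,y\}$ as a $2$-edge only helps if you also delete every $(\beta+1)$-edge containing it; the paper deletes every $(\beta+1)$-set containing a smaller edge. This deletion is harmless, because a proper colouring already makes $\{x,y\}$, and hence every superset, non-monochromatic. You present ``add heavy pairs'' and ``restrict the large edges'' as alternatives, but each fails alone. The first leaves the $(2,\beta+1)$-codegree of $\{x,y\}$ near $|N_G(x)\cap N_G(y)|\,\Delta^{\beta-1}$. The second loses $\beta$-frugality.

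Second, hypothesis (a) for an $s$-set $S$ essentially requires $|\bigcap_{v\in S}N_G(v)|\le \Delta^{(\beta+1-s)/\beta}/f$. This is most stringent at $s=\beta$. With heavy pairs as the only new edges, a surviving $s$-set is controlled only through a light pair inside it. So the pair threshold must lie below $\Delta^{1/\beta}/f$, not at the value $\Delta^{1-1/\beta-\epsilon}$ your pair computation suggests. Concretely, take $\beta=3$ and $t\ge 4$, and pad the large side of a $K_{3,\sqrt{\Delta}}$ with pendant leaves so those vertices have degree $\Delta$. This graph is $C_{2t}$-free, and the three pairs on the small side are light at that threshold. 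Yet the triple lies in about $\Delta^{3/2}$ edges of size $4$, which exceeds $\Delta_*/f=\Delta^{4/3}/f$.

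The fix, as in the paper, is a constant threshold $\alpha=2t$. Every $s$-set containing no edge of $\mathcal{H}$ then has at most $2t$ common neighbours. Its $(\beta+1)$-codegree is therefore at most $2t\Delta^{\beta+1-s}\le \Delta_*^{\beta+1-s}/f$ with $f=\Delta^{1/\beta}/2t$, for all $2\le s\le\beta$ at once.

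The number of heavy partners $\sigma(u)$ of a vertex $u$ then needs no Bondy--Simonovits-type estimate. The bipartite graph between $N_G(u)$ and $\sigma(u)$ has more than $\alpha|\sigma(u)|$ edges. It is also $P_{2t}$-free: a $P_{2t}$ contains a $P_{2t-1}$ with both ends in $N_G(u)$, which closes through $u$ into a $C_{2t}$. So Erd\H{o}s--Gallai gives $|\sigma(u)|<t\Delta/(\alpha-t)=\Delta$. No sublinear count is needed, since a $2$-degree of $2\Delta$ is negligible against $\Delta_*=\Delta^{1+1/\beta}$.

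Finally, you misremember the sparsity hypothesis. The Li--Postle theorem only asks that each vertex lie in at most $\Delta_*^2/f$ graph triangles formed by $2$-edges. This is immediate from $\binom{\Delta_2}{2}=O(\Delta^2)$. Your Berge-cycle and closed-walk counting targets a condition that is not in the theorem, and as written it is unsubstantiated.
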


Note that in the statement of \cref{thm:frugal-cycle}, excluding $C_4 = K_{2,2}$ as a subgraph suffices to obtain a polylogarithmic reduction, as opposed to distance-$2$ colouring where $\bigOmega{\Delta^2}$ colours may be required even when considering graphs of girth $6$. Somewhat surprisingly, we show that for any fixed frugality $\beta \ge 2$, we can exclude $K_{\beta,t}$ as a subgraph and obtain the same conclusion as \cref{thm:frugal-cycle}.

\begin{restatable}{theorem}{maintheorem}\label{thm:frugal-Kbt}
	Fix $\beta \ge 2$ and $t \ge 2$. Let $G$ be a $K_{\beta,t}$-free graph of maximum degree $\Delta$. Then \[\chi_\beta(G) = \bigO{\frac{\Delta^{1 + 1/\beta}}{(\log\Delta)^{1/\beta}}}.\]
\end{restatable}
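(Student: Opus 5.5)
We follow the approach announced in the abstract and reduce $\beta$-frugal colouring to proper colouring of an auxiliary hypergraph, to which we apply the sparse hypergraph colouring theorem of Li and Postle. We use it in roughly this form: if $\mathcal{H}$ is an $r$-uniform hypergraph ($r$ fixed) of maximum degree $D$ whose codegrees are all small, say every $(r-1)$-set lies in at most $D^{1/(r-1)-\varepsilon}$ edges (or a linearity-type condition), then $\chi(\mathcal{H}) = \bigO{(D/\log D)^{1/(r-1)}}$.

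\textbf{Construction.} Let $\mathcal{H}$ be the hypergraph on $V(G)$ whose edges are of two types. The first type is the edges of $G$, viewed as $2$-sets. The second type is the $(\beta+1)$-sets $S$ such that $S\subseteq N(v)$ for some vertex $v$. A proper colouring of $\mathcal{H}$ (no monochromatic edge) is exactly a $\beta$-frugal colouring of $G$. The $(\beta+1)$-uniform part has maximum degree at most $\Delta\binom{\Delta-1}{\beta}=\bigO{\Delta^{\beta+1}}$, so with $r=\beta+1$ the target bound is
\[
\bigO{(\Delta^{\beta+1}/\log\Delta)^{1/\beta}}=\bigO{\Delta^{1+1/\beta}/(\log\Delta)^{1/\beta}}.
\]
The graph edges number at most $\Delta$ per vertex, which is negligible. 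We handle them either with the mixed-uniformity version of Li--Postle, or by first properly colouring $G$ with $\Delta+1$ colours and taking a product colouring. In the product approach we colour $\mathcal{H}$ restricted to $(\beta+1)$-edges, so the palette multiplies by $\Delta+1$, which is too costly. So instead we include the $2$-edges directly and check that the theorem tolerates lower-uniformity edges of degree $\bigO{\Delta}\ll D^{1/\beta}$.

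\textbf{Codegree check, the key step.} Fix a set $T$ of $j$ vertices with $2\le j\le\beta$. The number of $(\beta+1)$-edges containing $T$ is at most $|N(T)|\cdot\Delta^{\beta+1-j}$, where $N(T)$ is the set of common neighbours of $T$. For $j=\beta$, $K_{\beta,t}$-freeness gives $|N(T)|\le t-1$, so the codegree is at most $(t-1)\Delta$. This equals $D^{1/\beta}$ up to constants, so it does not meet a polynomial-savings condition directly. Intermediate $j$ is worse, since $|N(T)|$ can be as large as $\Delta$ when $j<\beta$, giving codegree $\Delta^{\beta+2-j}$. The main obstacle is therefore to verify the precise codegree hypotheses of Li--Postle. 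If I recall correctly, their condition is of the form: for each $2\le j\le r-1$, the $j$-codegree is at most $D^{(r-j)/(r-1)}/f$ with $f$ a power of $\log D$ or of $D$. Here $D^{(\beta+1-j)/\beta}=\Delta^{(\beta+1)(\beta+1-j)/\beta}$, and for $j=\beta$ this is $\Delta^{1+1/\beta}$, beating $(t-1)\Delta$ by a factor $\Delta^{1/\beta}$. For smaller $j$ the trivial bound $\Delta^{\beta+2-j}$ must be compared with $\Delta^{(\beta+1)(\beta+1-j)/\beta}$. These exponents differ by $(\beta+1-j)/\beta-1=(1-j)/\beta<0$, so again there is polynomial savings. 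The remaining check is the $2$-codegree, which has the same form. So every codegree condition holds with a polynomial gap. Because of this, only $K_{\beta,t}$-freeness is used at $j=\beta$, and trivial bounds suffice elsewhere.

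\textbf{Conclusion.} Apply the Li--Postle theorem (with any slack needed for the $2$-edges, whose degree $\Delta=o(D^{1/\beta})$) to get a proper colouring of $\mathcal{H}$ with $\bigO{\Delta^{1+1/\beta}/(\log\Delta)^{1/\beta}}$ colours. This is the required $\beta$-frugal colouring. For $\beta=2$ the set $T$ with $j=\beta$ is a pair, $K_{2,t}$-freeness bounds common neighbourhoods, and the argument above covers it unchanged.
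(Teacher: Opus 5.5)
\textbf{There is a genuine gap.} It sits in your codegree check for $2\le j\le\beta-1$ and comes from a sign error. Li--Postle requires the $(j,\beta+1)$-codegree to be at most $\Delta_*^{\beta+1-j}/f=\Delta^{(\beta+1)(\beta+1-j)/\beta}/f$. You correctly compute that this exponent minus the exponent $\beta+2-j$ of your trivial bound is $(1-j)/\beta$. But this is negative, so the trivial bound \emph{exceeds} the allowed codegree by a factor $\Delta^{(j-1)/\beta}$: the hypothesis fails, and no $f>1$ works.

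The same thing already happens at $j=\beta$, where the trivial bound $\Delta^2$ exceeds $\Delta^{1+1/\beta}$; that is why you needed $K_{\beta,t}$-freeness there. For $j<\beta$, however, $K_{\beta,t}$-freeness gives no bound on common neighbourhoods of $j$-sets. For instance, let $\beta\ge 3$ and $t\ge\beta$. Take $K_{\beta-1,\Delta}$ and attach pendant leaves to each vertex of the large side to raise its degree to $\Delta$. This graph is $K_{\beta,t}$-free, yet a pair in the small side lies in $\Delta\binom{\Delta-2}{\beta-1}=\Theta(\Delta^{\beta})$ of your $(\beta+1)$-edges, against the allowed $\Delta^{\beta-1/\beta}/f$. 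Inflating $\Delta_*$ to compensate would lose a polynomial factor. So your argument only proves the result for $\beta=2$, or more generally for $K_{2,t}$-free graphs, which is Proposition~\ref{prop:frugal-K2t}.

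\textbf{The missing idea} is to change the hypergraph so that $s$-sets with large common neighbourhoods are already prevented from being monochromatic by a smaller edge. For each $2\le s\le\beta$, the paper adds the \emph{special $s$-sets}: independent $s$-sets $S$ with $|N^{\cap}(S)|>\alpha_s:=\Delta^{(\beta+1-s)/\beta-\varepsilon}$ that contain no smaller special set. It keeps only those $(\beta+1)$-subsets of neighbourhoods that contain no smaller edge. A proper colouring is still $\beta$-frugal. Now every $s$-set inside a $(\beta+1)$-edge has $|N^{\cap}(S)|\le\alpha_s$, so its $(\beta+1)$-codegree is at most $\alpha_s\Delta^{\beta+1-s}\le\Delta_*^{\beta+1-s}/\Delta^{\varepsilon}$.

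The price is that the new edges of sizes $2,\dots,\beta$ must have small $\ell$-degrees and $(s,\ell)$-codegrees, and this is where $K_{\beta,t}$-freeness really enters. Fix $S$ and consider the bipartite graph between $N^{\cap}(S)$ (at most $\alpha_s$ vertices) and the completions $S'$ of $S$ to special $\ell$-sets (each of degree $>\alpha_\ell$). A copy of $K_{\beta,t'}$ in it, with $t'=\binom{t-1}{\ell-s}+1$ and the $\beta$ vertices in $N^{\cap}(S)$, would yield a $K_{\beta,t}$ in $G$. The K\H{o}v\'ari--S\'os--Tur\'an bound then gives $O(\Delta^{\ell-s+\beta\varepsilon})$ completions, comfortably below $\Delta_*^{\ell-s}/f$.
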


We also show that \cref{thm:frugal-cycle} and \cref{thm:frugal-Kbt} are tight up to a constant factor.

\begin{restatable}{theorem}{girth}\label{thm:frugal-girth}
	Fix $\beta \ge 1$. There exists an absolute constant $c_\beta > 0$ such that, for every $\Delta \ge 2$ and $g\ge 3$, there exists a graph of maximum degree $\Delta$ and girth $g$ such that \[\chi_\beta(G) \ge c_\beta \frac{\Delta^{1 + 1/\beta}}{(\log\Delta)^{1/\beta}}.\]
\end{restatable}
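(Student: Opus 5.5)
We prove \cref{thm:frugal-girth} with a random construction, the same way Bollobás obtains \cref{thm:chromatic-girth}, but we count colour classes that are ``$\beta$-frugal-independent'' instead of ordinary independent sets. The observation driving the argument is this: in a $\beta$-frugal colouring, each colour class $S$ is an independent set in which every vertex of $G$ has at most $\beta$ neighbours. If $G$ is $d$-regular on $n$ vertices, double counting edges between $S$ and $V(G)$ gives
\[|S|\, d = \sum_v |N(v)\cap S| \le \beta n ,\]
which is useless on its own. We therefore need a probabilistic bound showing that, in a suitable random sparse graph, every such set $S$ has size at most $\bigO{n (\log\Delta)^{1/\beta}/\Delta^{1/\beta}}$. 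Then
\[\chi_\beta(G) \ge \frac{n}{\max|S|} = \bigOmega{\frac{\Delta^{1/\beta}}{(\log \Delta)^{1/\beta}}},\]
which is missing a factor $\Delta$. So the right random model is not a plain random graph: the colour classes must be forced to be smaller by a further factor $\Delta$.

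For this reason we take $G$ to be the line graph of a random sparse $(\beta+1)$-uniform hypergraph $\calH$, in the spirit of the $G_n^{\beta+1}$ example. Let $\calH$ be a random $(\beta+1)$-uniform (multi)hypergraph on $N$ vertices in which each vertex has degree about $D$, and let $G$ be the graph on the edge set of $\calH$ with adjacency meaning that two edges intersect. Then $\Delta(G) \approx (\beta+1)D$. A $\beta$-frugal colour class $S \subseteq E(\calH)$ is a set of pairwise disjoint edges (a matching) such that every edge $e$ of $\calH$ meets at most $\beta$ edges of $S$. Equivalently, no edge of $\calH$ is entirely covered by vertices lying in distinct edges of $S$. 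Let $M=|S|$ and $U=\bigcup S$, so $|U|=(\beta+1)M$. The frugality condition says that no edge of $\calH$ meets $\beta+1$ distinct members of $S$. The number of $(\beta+1)$-sets transversal to distinct edges of $S$ is about $\binom{M}{\beta+1}(\beta+1)^{\beta+1}$. Each such set is an edge with probability $p \approx D/N^{\beta}$. The probability that none is an edge is about
\[\exp\!\left(-c M^{\beta+1} D / N^{\beta}\right).\]
The number of choices of $S$ is at most $\binom{|E|}{M} \le (e N D / M)^{M}$. A first moment computation then shows that no such $S$ exists once $M^{\beta} D / N^{\beta} \gg \log(ND/M)$, that is, once $M \ge C N (\log D)^{1/\beta} / D^{1/\beta}$. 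Since $|E(\calH)| \approx ND/(\beta+1)$,
\[\chi_\beta(G) \ge \frac{|E|}{M} = \bigOmega{\frac{D^{1+1/\beta}}{(\log D)^{1/\beta}}} .\]
This is the desired bound with $\Delta \approx (\beta+1)D$.

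Two remaining steps turn this into a graph of girth $g$ and maximum degree $\Delta$. First, we take the random hypergraph with edge probability $p = D/N^{\beta}$ for $N$ large. Its degrees concentrate around $D/\beta!$, and we delete the few edges at vertices of abnormally high degree. Second, we delete short cycles. A cycle of length $\ell < g$ in the line graph $G$ comes either from a Berge cycle of length at most $\ell$ in $\calH$, or from a single vertex of $\calH$ whose link produces cliques (triangles). The triangles are unavoidable: the edges through one vertex of $\calH$ form a clique in $G$, so the line graph always contains triangles.

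The girth requirement therefore forces a change of plan: the hypergraph structure must be encoded without cliques. We replace the line graph by the bipartite incidence graph $B$ of $\calH$, with vertex side $V(\calH)$ and edge side $E(\calH)$, where each edge has degree $\beta+1$ and each vertex has degree about $D$. Girth of $B$ equals twice the Berge girth of $\calH$, and the expected number of Berge cycles of length at most $g$ is $\bigO{(D)^{g}}=o(N)$. Deleting one edge from each short Berge cycle keeps almost all of the structure, and a first-moment statement that holds for all large $S$ survives such deletions if it is applied to the cleaned hypergraph. The frugality constraint in $B$ is that each hypergraph vertex sees at most $\beta$ hyperedges of each colour. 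To obtain the lower bound on $\chi_\beta(B)$, we would instead colour a random $D$-regular bipartite graph $B$ with sides $X$ and $Y$. A colour class $S$ restricted to $Y$ satisfies two conditions: every $x \in X$ has at most $\beta$ neighbours in $S$, and no $x$ has $\beta+1$ neighbours in $S$. Bounding $|S|$ by the first moment method reproduces the computation above with transversal $(\beta+1)$-sets replaced by $(\beta+1)$-subsets of $S$. Each vertex of $X$ independently has probability about $\binom{M}{\beta+1}(D/N)^{\beta+1}$ of receiving $\beta+1$ neighbours in $S$. Requiring that no vertex of $X$ does so costs
\[\exp\!\left(-N (MD/N)^{\beta+1}\right),\]
against at most $\binom{N}{M}$ choices of $S$. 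This gives $M \lesssim N (\log D)^{1/\beta}/D^{1+1/\beta}$ and hence $\chi_\beta \gtrsim D^{1+1/\beta}/(\log D)^{1/\beta}$.

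\textbf{Main obstacle.} The hardest step is making this first moment computation rigorous in a model that also allows short cycles to be removed. I would use a binomial random bipartite graph with edge probability $D/N$, delete the few vertices of excessive degree and one vertex from each short cycle, and show that these deletions remove $o(N)$ vertices. The concentration estimate (a Chernoff or Janson bound, using that the events ``$x$ has at least $\beta+1$ neighbours in $S$'' are independent over $x$) must hold uniformly for all sets $S$ of size $M$, so that it still applies after the deletions.
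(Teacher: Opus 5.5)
The gap is the deletion step. You flag it as the main obstacle, but your proposed resolution does not work.

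Your final model is a binomial random bipartite graph with sides $X,Y$, with a first-moment bound on a single colour class $S\subseteq Y$. That computation is correct. A set $S$ of size $M=CN(\log D)^{1/\beta}/D^{1+1/\beta}$ has no ``witness'' $x\in X$ with $\beta+1$ neighbours in $S$ with probability at most $e^{-\mu}$, where $\mu=\Theta(N(MD/N)^{\beta+1})=\Theta(C^{\beta}M\log D)$. For $C$ large this beats $\binom{N}{M}\le e^{O(M\log D)}$. Your opening claim that a plain random graph can only give $\Delta^{1/\beta}/(\log\Delta)^{1/\beta}$ is wrong: the same computation works in $\mathbb{G}(n,d/n)$, which is the model the paper uses. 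The line-graph and incidence-graph detours can be dropped.

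Here is why the deletion step fails as written:
\begin{itemize}
\item The first moment only guarantees that each large $S$ has at least one witness.
\item The deleted vertices (high degree, short cycles) are determined by the graph, so they may include every witness of some $S$.
\item You cannot re-apply the first-moment bound to the cleaned graph, since it is no longer binomial. Holding ``uniformly over all $S$'' does not protect against this.
\item ``$o(N)$ deletions'' is the wrong criterion, and false for fixed $D$: about $e^{-\Theta(D)}N$ vertices have degree above $10D$. Meanwhile a given $S$ has only $\Theta(N(\log D)^{1+1/\beta}/D^{1+1/\beta})$ witnesses in expectation, a vanishing fraction of $N$.
\item You cannot union-bound over the possible sets of surviving vertices either. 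The per-$S$ failure probability $e^{-\Theta(M\log D)}$ is far larger than $2^{-N}$.
\end{itemize}

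There are two ways to close this.

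\textbf{Within your approach: make the witness property robust.} By the lower-tail Chernoff bound, $\Pr[W_S\le\mu/2]\le e^{-\mu/8}$, which still beats $\binom{N}{M}$ for $C$ large. So w.h.p.\ every $S$ of size $M$ has at least $\mu/2$ witnesses. By Markov, with probability at least $1/2$ at most $O(e^{-cD}N+(2D)^{g})$ vertices are deleted. This is less than $\mu/2$ once $D$ is large and $N\gg D^{g}$, because $e^{-cD}\ll(\log D)^{1+1/\beta}/D^{1+1/\beta}$. Then every $S$ keeps a surviving witness, and $\chi_\beta\ge |Y'|/M$.

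\textbf{The paper's route: avoid the issue altogether.} It shows that w.h.p.\ every induced subgraph of $\mathbb{G}(n,d/n)$ on at least $n/2$ vertices has $\chi_\beta\ge k$. This needs failure probability $e^{-\Omega(n\log d)}$ per (subset, colouring) pair, which a single colour class cannot deliver. The paper gets it by analysing a whole $k$-colouring:
\begin{itemize}
\item Cut the colour classes into $k$ monochromatic chunks $V_1,\dots,V_k$ of size $t=n/4k$.
\item For each of the $\binom{k}{2}$ pairs, the event that every vertex of $V_i$ has at most $\beta$ neighbours in $V_j$ has probability at most $\exp(-t^{\beta+2}p^{\beta+1}/4(\beta+1)!)$. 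These events are independent.
\item The total is $\exp(-\Theta(nd^{\beta+1}/k^{\beta}))$, which for the paper's choice of $k$ is $\exp(-4n\log d)$.
\item This beats the $e^{2n\log d}$ choices of subset and colouring.
\end{itemize}
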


For an integer $\Delta$ and a graph $H$, let $\chi_\beta(\Delta, H)$ be defined as the maximum value of $\chi_\beta(G)$ over all $H$-free graphs $G$ of maximum degree $\Delta$. The following corollaries are direct consequences of  \cref{thm:frugal-cycle}, \cref{thm:frugal-Kbt}, and \cref{thm:frugal-girth}.

\begin{corollary}\label{cor:frugal-cycle}
	Fix $\beta \ge 2$ and $t \ge 2$. Then
	\[\chi_\beta(\Delta,C_{2t}) = \bigTheta{\frac{\Delta^{1 + 1/\beta}}{(\log\Delta)^{1/\beta}}}.\]
\end{corollary}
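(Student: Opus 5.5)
The corollary asserts matching upper and lower bounds on $\chi_\beta(\Delta,C_{2t})$, and I would prove the two directions separately, each directly from an earlier theorem.

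\emph{Upper bound.} Let $G$ be any $C_{2t}$-free graph of maximum degree $\Delta$. By \cref{thm:frugal-cycle}, applied with the same fixed $\beta \ge 2$ and $t \ge 2$, we get $\chi_\beta(G) \le C\,\Delta^{1+1/\beta}/(\log\Delta)^{1/\beta}$ for some constant $C = C(\beta,t)$. This holds for all $\Delta$ sufficiently large, and by enlarging $C$ it holds for all $\Delta \ge 2$, since for bounded $\Delta$ the trivial bound $\chi_\beta(G) \le \chi(G^2) \le \Delta^2+1$ suffices. Taking the maximum over all such $G$ yields $\chi_\beta(\Delta,C_{2t}) = \bigO{\Delta^{1+1/\beta}/(\log\Delta)^{1/\beta}}$.

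\emph{Lower bound.} Fix $\Delta \ge 2$. Apply \cref{thm:frugal-girth} with girth $g = 2t+1$. This produces a graph $G$ of maximum degree $\Delta$ and girth $2t+1$ satisfying
\[\chi_\beta(G) \ge c_\beta\frac{\Delta^{1+1/\beta}}{(\log\Delta)^{1/\beta}}.\]
Since every cycle in $G$ has length at least $2t+1 > 2t$, the graph $G$ contains no $C_{2t}$ as a subgraph, so it is $C_{2t}$-free. Hence $G$ is one of the graphs in the maximum defining $\chi_\beta(\Delta,C_{2t})$, which gives $\chi_\beta(\Delta,C_{2t}) \ge c_\beta\,\Delta^{1+1/\beta}/(\log\Delta)^{1/\beta}$.

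Combining the two directions gives the claimed $\Theta$-bound. There is no real obstacle, since all the content lies in \cref{thm:frugal-cycle} and \cref{thm:frugal-girth}. The only points needing care are choosing the girth $g > 2t$ so that $C_{2t}$-freeness follows, and handling small $\Delta$ through the constants.
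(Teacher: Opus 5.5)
Your proposal is correct and matches the paper, which treats this corollary as a direct consequence of \cref{thm:frugal-cycle} for the upper bound and of \cref{thm:frugal-girth} with any girth $g > 2t$ for the lower bound. Your extra handling of bounded $\Delta$ via $\chi_\beta(G) \le \chi(G^2) \le \Delta^2+1$ is a harmless addition, since the $\Theta$-statement is asymptotic in $\Delta$.
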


\begin{corollary}\label{cor:frugal-Kbt}
	Fix $\beta \ge s \ge 2$ and $t \ge 2$. Then
	\[\chi_\beta(\Delta,K_{s,t}) = \bigTheta{\frac{\Delta^{1 + 1/\beta}}{(\log\Delta)^{1/\beta}}}.\]
\end{corollary}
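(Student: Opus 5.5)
The corollary follows by combining the upper bounds of \cref{thm:frugal-cycle} and \cref{thm:frugal-Kbt} with the lower bound of \cref{thm:frugal-girth}. For \cref{cor:frugal-Kbt}, fix $\beta \ge s \ge 2$ and $t \ge 2$.

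\emph{Upper bound.} Let $G$ be $K_{s,t}$-free. Since $s \le \beta$, the graph $K_{s,t}$ is a subgraph of $K_{\beta,t'}$ with $t' = \max(t,\beta)$: take $s$ of the $\beta$ vertices on one side and $t$ of the $t'$ vertices on the other. Hence $G$ is also $K_{\beta,t'}$-free. Applying \cref{thm:frugal-Kbt} with $\beta$ and $t'$ gives $\chi_\beta(G) = \bigO{\Delta^{1+1/\beta}/(\log\Delta)^{1/\beta}}$. The implied constant depends only on $\beta$ and $t'$, and hence only on the fixed parameters. Taking the maximum over all such $G$ bounds $\chi_\beta(\Delta,K_{s,t})$ from above.

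\emph{Lower bound.} Apply \cref{thm:frugal-girth} with girth $g = 5$. Any graph of girth at least $5$ contains no $C_4$. Since $s,t \ge 2$, every copy of $K_{s,t}$ contains a $C_4$, so such a graph is $K_{s,t}$-free. \cref{thm:frugal-girth} then supplies, for every $\Delta \ge 2$, a $K_{s,t}$-free graph of maximum degree $\Delta$ with $\chi_\beta(G) \ge c_\beta \Delta^{1+1/\beta}/(\log\Delta)^{1/\beta}$. This gives the matching lower bound on $\chi_\beta(\Delta,K_{s,t})$.

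For \cref{cor:frugal-cycle} the argument is identical. The upper bound is \cref{thm:frugal-cycle} directly. For the lower bound, take girth $g = 2t+1$ in \cref{thm:frugal-girth}, so that the graph is $C_{2t}$-free.

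The only step needing any care is the monotonicity in the upper bound for \cref{cor:frugal-Kbt}. We must check that excluding the smaller graph $K_{s,t}$ implies excluding $K_{\beta,t'}$, with the roles of the two sides chosen correctly; this is exactly where the hypothesis $s \le \beta$ is used.
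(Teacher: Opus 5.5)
Your proof is correct and is exactly the argument the paper means when it calls the corollary a direct consequence of \cref{thm:frugal-Kbt} and \cref{thm:frugal-girth}. For the upper bound you use monotonicity, $K_{s,t}\subseteq K_{\beta,t}$, and for the lower bound the girth-$\ge 5$ (hence $C_4$-free, hence $K_{s,t}$-free) graphs of \cref{thm:frugal-girth}; choosing $t'=\max(t,\beta)$ is harmless but unnecessary, since $t'=t$ already works when $s\le\beta$.
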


In contrast, if $H$ is not a bipartite graph, we know from \cref{thm:hmr} that $\chi_\beta(\Delta, H) = \bigTheta{\Delta^{1 + 1/\beta}}$. We pose the following problem, inspired by \cref{conj:aks}.

\begin{problem}\label{problem:frugal}
	Fix $\beta \ge 1$. For which (bipartite) graphs $H$ do we have $\chi_\beta(\Delta, H) = o(\Delta^{1 + 1/\beta})$?
\end{problem}

Note that $\chi_1(\Delta,K_{2,2}) = \bigOmega{\Delta^2}$. We believe that for all $\beta \ge 2$, we have $\chi_\beta(\Delta, K_{t_\beta,t_\beta}) = \bigOmega{\Delta^{1 + 1/\beta}}$ for some sufficiently large $t_\beta$, perhaps even for $t_\beta=\beta+1$, but we have not found any constructions that would support such a claim; both known constructions certifying the tightness of \cref{thm:hmr} --- the line-graphs of dense $(\beta+1)$-uniform hypergraphs and the point-hyperplane bipartite incidence graphs of the projective geometry $PG(\beta+1,q)$ --- have arbitrarily large bicliques when $\beta \ge 2$.

Even just for $1$-frugal colouring (distance-2 colouring), characterising the graphs $H$ such that $\chi_1(\Delta,H) = o(\Delta^2)$ may prove to be an interesting but challenging problem.

\subsection{Hypergraph colouring}
The proofs of \cref{thm:frugal-cycle} and \cref{thm:frugal-Kbt} use the framework of proper hypergraph colouring. We first need to introduce some notations before stating the theorem we will use. The rank of a hypergraph $\calH$ is the maximum size of its edges. An $\ell$-edge is an edge with exactly $\ell$ vertices. The $\ell$-degree of a vertex $v$ is the maximum number of $\ell$-edges that contain $v$, and the maximum $\ell$-degree of $\calH$ is denoted $\Delta_{\ell}(\calH)$. Given a subset $S \subseteq V(\calH)$, the $\ell$-codegree of $S$ is the maximum number of $\ell$-edges that contain $S$. For $1 \le s < \ell$, the maximum $(s,\ell)$-codegree $\Delta_{s,\ell}(\calH)$ is the maximum $\ell$-codegree over all subsets $S \subseteq V(\calH)$ of size exactly $s$; in particular, $\Delta_\ell(\calH) = \Delta_{1,\ell}(\calH)$.

A proper $k$-colouring of a hypergraph $\calH$ is a vertex colouring $V(\calH) \mapsto [k]$ such that no edge of $\calH$ is monochromatic. The chromatic number $\chi(\calH)$ is the minimum $k$ such that a proper $k$-colouring of $\calH$ exists. All hypergraphs considered have minimum edge size at least $2$, so a proper colouring always exists (e.g. by assigning a unique colour to each vertex). Using a straightforward application of the asymmetric Lovász Local Lemma, entropy compression, or even a counting argument of Rosenfeld \cite{rosenfeld2020another}, one can show that if $\calH$ has rank $r$ and maximum $\ell$-degree $\Delta_\ell$ for $\ell \in \{2,\dots,r\}$, then \begin{equation}\label{eq:hypergraph}
	\chi(\calH) = \bigO{\max_{2 \le \ell \le r}\Delta_\ell^{1/(\ell-1)}}.
\end{equation}
With additional assumptions on the local sparsity of $\calH$, reminiscent of \cref{thm:aks}, Li and Postle \cite{li2022chromatic} improve \cref{eq:hypergraph} by a polylogarithmic factor.

\begin{theorem}[Li, Postle, 2022+, Theorem 1.8]\label{thm:li-postle}
	Fix $r\ge 3$, and let $\calH$ be a rank $r$ hypergraph with maximum $\ell$-degree at most $\Delta_\ell$ for each $2 \le \ell \le r$.
	
	Let $\Delta_* \coloneqq \max\limits_{2\le \ell \le r}\Delta_{\ell}^{1/(\ell - 1)}$, and suppose that there exists $f > 1$ such that
	\begin{enumerate}[label=(\alph*)]
		\item for all $2 \le s < \ell \le r$, $\calH$ has maximum $(s,\ell)$-codegree $\Delta_{s,\ell}(\calH) \le \Delta_*^{\ell - s} / f$, and
		\item every vertex $v \in V(\calH)$ is incident to at most $\Delta_*^2/f$ copies of $K_3$ (the graph triangle).
	\end{enumerate}
Then,
	\[\chi(\calH) = \bigO{\max_{2 \le \ell \le r} \pth{\frac{\Delta_\ell}{\log f}}^{\frac{1}{\ell - 1}}} = \bigO{\frac{\Delta_*}{(\log f)^{1/(r-1)}}}.\]
\end{theorem}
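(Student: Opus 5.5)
Theorem~\ref{thm:li-postle} is quoted from Li and Postle, and in this paper it is used as a black box. Were I to prove it, I would run a Rödl-nibble / semi-random colouring procedure in the style of Johansson, Molloy--Reed and Alon--Krivelevich--Sudakov (\cref{thm:aks}), adapted to hyperedges, and finish with the bound \eqref{eq:hypergraph}.

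\textbf{Setup.} Fix $k = C\max_{\ell}(\Delta_\ell/\log f)^{1/(\ell-1)}$. Give every vertex the full list $[k]$. For each vertex $v$ and colour $c$, track the ``$\ell$-degree in colour $c$''. This is the number of $\ell$-edges $e \ni v$ all of whose other vertices are either already coloured $c$ or still uncoloured with $c$ in their list. Call such an edge \emph{partially monochromatic}: it has been reduced to a smaller edge that forbids $c$ at $v$.

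\textbf{One round.} Each uncoloured vertex activates with small probability $\theta$ and picks a uniform colour from its list. It keeps the colour unless this completes a monochromatic edge. Colours that would complete a monochromatic edge at $v$ are deleted from $v$'s list. Equalising coin flips make every colour survive at $v$ with the same probability.

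\textbf{Expectations.} Per round I would compute three things.
\begin{itemize}[label=$\cdot$]
	\item List sizes shrink by a factor $1-\theta\cdot(\text{normalised load})$.
	\item The $\ell$-edges through $v$ that remain ``live'' for a colour lose vertices and become $(\ell-1)$-edges. This is why all ranks must be tracked simultaneously, weighted as $\Delta_\ell/k^{\ell-1}$.
	\item The total load $\sum_\ell \Delta_\ell(v,c)/L^{\ell-1}$ decreases faster than the list size $L$. This mirrors the Johansson/Molloy entropy gain, which yields the $\log f$ saving.
\end{itemize}

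\textbf{Concentration.} This is where the hypotheses enter. Condition~(a), the $(s,\ell)$-codegree bound $\Delta_*^{\ell-s}/f$, guarantees that no set of $s \ge 2$ vertices lies in too many edges. Hence the random choices affecting $\Delta_\ell(v,c)$ have bounded effect, and Talagrand's or Kim--Vu's inequality gives concentration. Condition~(b), the triangle bound, plays the role of local sparsity in \cref{thm:aks}. Neighbourhoods in the $2$-graph are sparse, so the colours in $v$'s neighbourhood are nearly independent. This is what allows the expected number of surviving colours to be computed as in the locally sparse setting. The Lovász Local Lemma then shows that all tracked parameters stay close to their expected trajectories in every round.

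\textbf{Termination.} After about $\log f/\theta$ rounds the list size is $L$. At that point, for every $\ell$, the live $\ell$-degree $\Delta'_\ell$ satisfies $(\Delta'_\ell)^{1/(\ell-1)} \le \epsilon L$. Applying \eqref{eq:hypergraph}, or a direct Local Lemma, to the residual list-colouring problem completes the colouring.

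\textbf{Main obstacle.} The hard step is the analysis of the coupled trajectory. Edges of size $\ell$ degrade into edges of size $\ell-1$, so the error terms of different ranks feed into each other. One has to choose a potential, such as the weighted sum above, for which the $\log f$ gain survives across all ranks simultaneously. My first attempt would be to control the exact rank-weighted potential $\sum_\ell \Delta_\ell(v,c)/L^{\ell-1}$, as in Li--Postle's ``forbidden submatchings'' framework. The final bound is then $\bigO{\Delta_*/(\log f)^{1/(r-1)}}$, because the worst exponent is $1/(r-1)$.
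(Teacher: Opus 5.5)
\textbf{Verdict: there is a genuine gap.} Your sketch is a reasonable plan, but it is not a proof, and it assigns the wrong hypothesis to the key step.

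The paper gives no proof of this statement. It imports it from Li and Postle, remarking only that they derive it from their Theorem~1.7, a criterion stated in terms of counts of ``hypergraph triangles''. So reading it as a black box is correct.

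The gap is exactly where the theorem's content lies. You need to show that the rank-weighted load $\sum_\ell \Delta_\ell(v,c)/L^{\ell-1}$ shrinks faster than the list size $L$, by enough to give a $\log f$ saving for all ranks at once, while $\ell$-edges keep degrading into $(\ell-1)$-edges. This is asserted, not computed; you flag it yourself as the main obstacle and name only a potential you would ``first attempt''. Your termination claim, that after $\approx \log f/\theta$ rounds $(\Delta'_\ell)^{1/(\ell-1)}\le\epsilon L$ for every $\ell$, is the same unproven statement in another form. Everything else (activation, equalising coin flips, Talagrand/Kim--Vu, the final list Local Lemma) is standard scaffolding around that missing computation.

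There is also a concrete error in how you use the hypotheses, and it bears on the missing step. You use (a) only for concentration, and you credit (b) with the near-independence that makes the expectation computation work. That cannot be right:
\begin{itemize}
\item The complete $r$-uniform hypergraph $K_n^{(r)}$ has no $2$-edges, so (b) holds for every $f$.
\item Yet $\chi(K_n^{(r)})=\lceil n/(r-1)\rceil\ge\Delta_*/(r-1)$, so no $\log f$ gain is possible there.
\item What fails is (a): $\Delta_{s,r}=\binom{n-s}{r-s}=\Theta(\Delta_*^{r-s})$ forces $f=O(1)$.
\end{itemize}
So the near-independence of the events ``edge $e\ni v$ becomes dangerous for $c$'' over edges of size at least $3$ must come from (a), not (b).

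The cited route makes this explicit. The expectation step needs edges through $v$ to rarely interact, for instance by sharing a further vertex or being linked through a third edge (a hypergraph triangle). Theorem~1.8 is then obtained from Theorem~1.7 by bounding such counts. Purely graph configurations are what (b) controls; configurations containing a larger edge must be handled through the codegree bounds (a). For example, $2$-edges $vu$ and $uw$ together with a $3$-edge through $\{v,w\}$ occur at most $\Delta_2^2\,\Delta_{2,3}\le\Delta_*^3/f$ times, a factor $f$ below the trivial $\Delta_2\Delta_3\le\Delta_*^3$. Your sketch has no counterpart of this reduction. So even granting the nibble machinery, it does not show that (a) and (b) as stated suffice.

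Finally, the paper attributes Li--Postle's argument to this hypergraph-triangle criterion, not to a ``forbidden submatchings'' framework.
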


We remark that a similar result was first proved for $r$-uniform hypergraphs by Cooper and Muyabi \cite{cooper2016coloring}. We also remark that Li and Postle derived \cref{thm:li-postle} from a more general criterion that involves counting ``hypergraph triangles'', cf. \cite[Theorem 1.7]{li2022chromatic}.

\section{Upper bounds}
\newcommand{\coN}{N^{\cap}}

In this section, we prove \cref{thm:frugal-cycle} and \cref{thm:frugal-Kbt} using \cref{thm:li-postle}. We first illustrate how \cref{thm:li-postle} can be directly applied in the case of $K_{2,t}$-free graphs. Throughout this section, it shall be assumed that the maximum degree $\Delta$ is sufficiently large. Given a subset of vertices $S$ in a graph $G$, the \emph{common neighbourhood} of $S$ is denoted $\coN_G(S) \coloneqq \bigcap\limits_{v \in S}N_G(v)$.

\begin{proposition}\label{prop:frugal-K2t}
	Fix $\beta \ge 2$ and $t \ge 2$. Let $G$ be a $K_{2,t}$-free graph of maximum degree $\Delta$. Then \[\chi_\beta(G) = \bigO{\frac{\Delta^{1 + 1/\beta}}{(\log\Delta)^{1/\beta}}}.\]
\end{proposition}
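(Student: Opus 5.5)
The plan is to encode $\beta$-frugal colourings of $G$ as proper colourings of an auxiliary hypergraph, and then apply \cref{thm:li-postle} with $r = \beta+1 \ge 3$ and $f$ a small power of $\Delta$. Define $\calH$ on vertex set $V(G)$ with two kinds of edges. The $2$-edges are the edges of $G$. The $(\beta+1)$-edges are all $(\beta+1)$-subsets of $N_G(v)$, over all $v \in V(G)$; a set arising from several $v$ is included once. A colouring of $V(G)$ is a proper colouring of $\calH$ exactly when it is a proper colouring of $G$ in which no $\beta+1$ neighbours of a common vertex share a colour. That is precisely a $\beta$-frugal colouring, so $\chi_\beta(G) = \chi(\calH)$. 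The rank of $\calH$ is $r=\beta+1\ge 3$.

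Next I bound the degrees. Clearly $\Delta_2(\calH) \le \Delta$. A vertex $u$ lies in $N_G(v)$ for at most $\Delta$ choices of $v$. Each such $v$ contributes at most $\binom{\Delta-1}{\beta} \le \Delta^\beta$ edges through $u$. Hence $\Delta_{\beta+1}(\calH) \le \Delta^{\beta+1}$, and $\Delta_\ell(\calH)=0$ for the other $\ell$ (we may take those bounds to be $0$). Therefore
\[\Delta_* = \max\pth{\Delta,\ (\Delta^{\beta+1})^{1/\beta}} = \Delta^{1+1/\beta}.\]

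The key step is checking condition (a), and this is the only place the $K_{2,t}$-freeness is used. The only relevant pairs are $\ell = \beta+1$ and $2 \le s \le \beta$. Fix a set $S$ of $s\ge 2$ vertices. Any $(\beta+1)$-edge containing $S$ is a subset of $N_G(v)$ for some $v \in \coN_G(S)$. Since $G$ is $K_{2,t}$-free and $s \ge 2$, we have $|\coN_G(S)| \le t-1$. Each such $v$ contributes at most $\binom{\Delta}{\beta+1-s} \le \Delta^{\beta+1-s}$ edges. So
\[\Delta_{s,\beta+1}(\calH) \le t\,\Delta^{\beta+1-s} = \frac{\Delta_*^{\beta+1-s}}{\Delta^{(\beta+1-s)/\beta}/t} \le \frac{\Delta_*^{\beta+1-s}}{f}, \qquad f \coloneqq \frac{\Delta^{1/\beta}}{t},\]
using $\beta+1-s \ge 1$. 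For condition (b), the triangles formed by $2$-edges of $\calH$ are the triangles of $G$. Each vertex lies in at most $\Delta^2$ of them, and $\Delta^2 \le \Delta^{2+2/\beta}/f = \Delta_*^2/f$ for $\Delta$ large.

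Since $f > 1$ for $\Delta$ large and $\log f = \bigTheta{\log \Delta}$ for fixed $\beta,t$, \cref{thm:li-postle} gives
\[\chi(\calH) = \bigO{\frac{\Delta_*}{(\log f)^{1/\beta}}} = \bigO{\frac{\Delta^{1+1/\beta}}{(\log\Delta)^{1/\beta}}},\]
which is the claimed bound. I expect no real obstacle here. The only point needing care is the codegree count, where the requirement $s\ge 2$ in (a) is exactly what allows the $K_{2,t}$-free hypothesis to cap the common neighbourhood. For $s=1$ the count would be $\Delta^{\beta+1}$ with no saving, but $s = 1$ is not required by \cref{thm:li-postle}.
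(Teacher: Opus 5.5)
Your proof is correct and follows the paper's argument essentially step for step. It uses the same auxiliary hypergraph (the edges of $G$ together with all $(\beta+1)$-subsets of neighbourhoods), the same bounds $\Delta_2\le\Delta$ and $\Delta_{\beta+1}\le\Delta^{\beta+1}$, the same use of $K_{2,t}$-freeness to get $|\coN_G(S)|\le t-1$ in the codegree condition, the same triangle count, and the same choice $f=\Delta^{1/\beta}/t$ when applying \cref{thm:li-postle}.
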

\begin{proof}
	Let $\calH = (V(G), \calE)$ where $\calE$ contains $E(G)$ and all $(\beta+1)$-subsets of vertices $S$ such that $S \subseteq N_G(v)$ for some $v \in V(G)$. $\calH$ has rank $\beta+1$. A proper colouring of $\calH$ is also proper for $G$ since $E(G) \subseteq \calE$, and is $\beta$-frugal since there is no vertex $v$ that has $\beta+1$ neighbours in $G$ of the same colour. Therefore, $\chi_\beta(G) \le \chi(\calH)$ (in fact there is equality).
	
	Clearly, $\calH$ has maximum $2$-degree at most $\Delta_2 \coloneqq \Delta$. Given a vertex $v \in V(G)$, there are $\deg_G(v)$ neighbours $u \in N_G(v)$, from which there are ${\deg_G(u) - 1 \choose \beta}$ ways to choose the remaining $\beta$ vertices to form a $(\beta+1)$-edge $S \in \calE$ containing $v$. Thus, $\calH$ has maximum $(\beta+1)$-degree at most $\Delta_{\beta+1} \coloneqq \Delta^{\beta + 1}$. Let $\Delta_* \coloneqq \max(\Delta_2,\Delta_{\beta+1}^{1/\beta}) = \Delta^{1 + 1/\beta}$. In order to apply \cref{thm:li-postle}, we need to bound the maximum $(s,\beta+1)$-codegree for every $2 \le s \le \beta$. Consider a subset $S$ of size $s \ge 2$; we have $|\coN_G(S)| < t$, otherwise we find a copy of $K_{2,t}$ in $G$, therefore the $(\beta+1)$-codegree of $S$ is at most $(t-1) \cdot {\Delta - s \choose \beta+1-s}$. Therefore $\calH$ has maximum $(s,\beta+1)$-codegree $\Delta_{s,\beta+1}(\calH) <  t\Delta^{\beta + 1 - s} \le \Delta_*^{\beta+1-s}/f$ with $f \coloneqq \Delta^{1/\beta}/t$. As for the number of triangles incident to a given vertex, it is at most ${\Delta_2 \choose 2} < \Delta_*^2/f$.
	
	All the conditions of \cref{thm:li-postle} are met, therefore we conclude that \[\chi_\beta(G) \le \chi(\calH) = \bigO{\frac{\Delta_*}{(\log f)^{1/\beta}}} = \bigO{\frac{\Delta^{1 + 1/\beta}}{(\log \Delta)^{1/\beta}}}.\qedhere\]
\end{proof}

In order to exclude a fixed even cycle $C_{2t}$ and derive the same upper bound as \cref{prop:frugal-K2t}, we will add $2$-edges in the auxiliary hypergraph $\calH$ between pairs of vertices that share ``too many'' common neighbours in $G$ (which we call \emph{special pairs}). This will give us enough control when bounding the maximum $(s,\beta+1)$-codegrees of $\calH$, and has a negligible impact on the $2$-degrees in $\calH$ (the idea being that if too many special pairs intersect in a given vertex, then we find a copy of $C_{2t}$ in $G$).

We will use a classical theorem of Erdős and Gallai \cite{erdos1959maximal} bounding the number of edges in a graph that excludes a path on $t$ vertices as a subgraph.

\begin{theorem}[Erdős, Gallai, 1959]\label{thm:path}
	Let $G$ be a $n$-vertex graph. If $G$ is $P_t$-free, then
	\[|E(G)| \le \frac{t-2}{2}n.\] 
\end{theorem}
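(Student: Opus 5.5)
We prove \cref{thm:path} by reducing to the case of large minimum degree, and then finding a long path with a Dirac/Pósa-type longest-path argument. Throughout, $P_t$ denotes the path on $t$ vertices.

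\emph{Reduction.} Suppose for contradiction that $G$ is $P_t$-free with $|E(G)| > \frac{t-2}{2}n$. We repeatedly delete any vertex of degree at most $\frac{t-2}{2}$ in the current graph. Each deletion lowers the vertex count by $1$ and the edge count by at most $\frac{t-2}{2}$. Hence the strict inequality $|E| > \frac{t-2}{2}|V|$ is preserved throughout. It also forces $|E|>0$ at every stage, so the process cannot exhaust the graph. It therefore stops at a nonempty subgraph $H$ with minimum degree $\delta(H) \ge k$, where $k$ is the least integer exceeding $\frac{t-2}{2}$, so that $2k+1 \ge t$. The inequality $|E| > \frac{t-2}{2}|V|$ holds in $H$, so it holds in some connected component $C$ of $H$. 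The component $C$ still has minimum degree at least $k$. Moreover $|E(C)| \le \binom{|V(C)|}{2}$ forces $|V(C)| - 1 > t-2$, i.e.\ $|V(C)| \ge t$. It now suffices to show that a connected graph $C$ with $\delta(C)\ge k$ contains a path on at least $\min(|V(C)|, 2k+1)$ vertices; both quantities are at least $t$, a contradiction.

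\emph{Longest path argument.} Let $P = v_1 v_2 \dots v_m$ be a longest path in $C$. By maximality, every neighbour of $v_1$ and of $v_m$ lies on $P$. If $m \ge 2k+1$ we are done. Otherwise $m \le 2k \le \deg(v_1) + \deg(v_m)$. Consider the sets
\[
A = \{ i : v_1 v_{i+1} \in E(C)\} \quad\text{and}\quad B = \{ i : v_i v_m \in E(C)\},
\]
both subsets of $\{1,\dots,m-1\}$, with $|A| + |B| \ge 2k \ge m > m-1$. By pigeonhole there is some $i \in A \cap B$. Then
\[
v_1 v_{i+1} v_{i+2}\dots v_m v_i v_{i-1} \dots v_1
\]
is a cycle through all $m$ vertices of $P$. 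If $m < |V(C)|$, connectivity of $C$ gives a vertex outside this cycle adjacent to some vertex on it. Opening the cycle at that vertex and appending the outside vertex yields a path on $m+1$ vertices, contradicting maximality. Hence $m = |V(C)| \ge t$, and $C$ contains $P_t$.

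The only step requiring care is this crossing argument, which produces the spanning cycle on $V(P)$ when $P$ is short. In particular one must check the index ranges of $A$ and $B$, so that the pigeonhole bound $|A|+|B| > m-1$ genuinely applies. The reduction step is routine bookkeeping of the edge-density inequality.
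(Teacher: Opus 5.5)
Your proof is correct. Peeling off vertices of degree at most $\frac{t-2}{2}$ preserves the strict inequality $|E|>\frac{t-2}{2}|V|$. The surviving component $C$ then has $\delta(C)\ge k$ with $2k+1\ge t$ and $|V(C)|\ge t$. Dirac's crossing argument on a longest path gives a path on at least $\min(|V(C)|,2k+1)\ge t$ vertices.

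The paper does not prove this statement at all. It is quoted from Erd\H{o}s and Gallai and used as a black box on the auxiliary bipartite graph between $N_G(u)$ and $\sigma(u)$. So your argument serves as a self-contained substitute rather than an alternative to anything in the text.

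What you give is the standard textbook proof. The other classical route first establishes the Erd\H{o}s--Gallai circumference theorem: a graph on $n$ vertices with no cycle longer than $k$ has at most $k(n-1)/2$ edges. It then applies this to $G$ plus a vertex joined to all of $V(G)$. That route proves more, but it requires the harder cycle statement, whereas yours needs only maximality of a longest path.

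Two cosmetic remarks. You tacitly assume $t\ge 2$; for $t=1$ a $P_1$-free graph is empty and the bound is trivial. Also, for $m=2$ the closed walk $v_1v_2v_1$ is not a cycle. In that case $\deg(v_1)=\deg(v_2)=1$ forces $C=K_2$, so $m=|V(C)|$ holds regardless.
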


We now prove \cref{thm:frugal-cycle}, of which we recall the statement below.
\cycles*
\begin{proof}
	Let $\alpha \coloneqq 2t$. A pair of non-adjacent vertices $\{u,v\} \subseteq V(G)$ is called a \emph{special pair} if they share $|\coN_G(\{u,v\})| > \alpha$ common neighbours. Let $\calH = (V(G), \calE)$ where $\calE$ contains $E(G)$, all special pairs, and all $(\beta+1)$-subsets of vertices $S$ such that $S \subseteq N_G(v)$ for some $v \in V(G)$ and such that $S$ does not contain a smaller edge. $\calH$ has rank $\beta+1$ and maximum $(\beta+1)$-degree at most $\Delta_{\beta+1} \coloneqq \Delta^{\beta+1}$.
	
	A proper colouring of $\calH$ is also proper for $G$ since $E(G) \subseteq \calE$, and is $\beta$-frugal since every $(\beta+1)$-subset $S \subseteq V(G)$ with a common neighbour contains an edge in $\calE$ and thus cannot be monochromatic. Therefore, $\chi_\beta(G) \le \chi(\calH)$.
	
	We first bound the maximum $2$-degree of $\calH$. Let $u$ be a vertex, and let $\sigma(u)$ be the set of vertices $v$ such that $\{u,v\}$ is a special pair. The $2$-degree of $u$ is precisely $\deg_G(u) + |\sigma(u)|$. Observe that $N_G(u)$ and $\sigma(u)$ are disjoint since special pairs are non-adjacent in $G$. Consider the bipartite graph $H$ induced by $G$ between $N_G(u)$ and $\sigma(u)$, i.e. where two vertices $(w,v) \in N_G(u) \times \sigma(u)$ are adjacent in $H$ if and only if $wv \in E(G)$. For every $v \in \sigma(u)$, since $\{u,v\}$ forms a special pair, we have $\deg_H(v) = |\coN_G(\{u,v\})| > \alpha$. Therefore, the number of edges in $H$ is at least
	\begin{equation}\label{eq:H-lower}
		|E(H)| > \alpha \cdot |\sigma(u)|.
	\end{equation}
	$H$ does not contain a copy of $P_{2t}$ as a subgraph, otherwise we would find a copy of $P_{2t - 1}$ in $H$ with both endpoints in $N_G(u)$, which we could then close into a copy of $C_{2t}$ in $G$ using $u$. Therefore we apply \cref{thm:path} with $n \coloneqq \deg_G(u) + |\sigma(u)|$ to deduce that the number of edges in $H$ is at most
	\begin{equation}\label{eq:H-upper}
		|E(H)| < t (\Delta + |\sigma(u)|).
	\end{equation}
	
	Putting \cref{eq:H-lower} and \cref{eq:H-upper} together, we obtain the inequality
	\begin{align*}
		|\sigma(u)| < \frac{t \Delta}{\alpha - t} = \Delta.
	\end{align*}
	Therefore the maximum $2$-degree of $\calH$ is at most $\Delta_2 \coloneqq 2\Delta$.
	
	Let $\Delta_* \coloneqq \max(\Delta_2, \Delta_{\beta + 1}^{1/\beta}) = \Delta^{1 + 1/\beta}$. We now bound the maximum $(s,\beta+1)$-codegrees. Consider an integer $2\le s \le \beta$, and let $S$ be a $s$-subset of $V(G)$. If $S$ contains an edge in $\calE$, then no $(\beta+1)$-edge contains $S$ by definition and thus $S$ has $(\beta+1)$-codegree $0$. Otherwise, we have $|\coN_G(S)| \le \alpha$, in which case $S$ has $(\beta+1)$-codegree at most $\alpha \cdot {\Delta - s \choose \beta + 1 - s} \le 2t\Delta^{\beta + 1 - s}$. We have thus shown that $\calH$ has maximum $(s,\beta+1)$-codegree $\Delta_{s,\beta+1}(\calH) \le \Delta_*^{\beta + 1 - s}/f$ with $f = \Delta^{1/\beta}/2t$. As for the number of triangles incident to a given vertex, it is at most ${\Delta_2 \choose 2} < 2\Delta^2 < \Delta_*^2/f$.
	
	All the conditions of \cref{thm:li-postle} are met, therefore we conclude that \[\chi_\beta(G) \le \chi(\calH) = \bigO{\frac{\Delta_*}{(\log f)^{1/\beta}}} = \bigO{\frac{\Delta^{1 + 1/\beta}}{(\log \Delta)^{1/\beta}}}.\qedhere\]
\end{proof}

In order to exclude $K_{\beta,t}$ as a subgraph, we will generalise the idea of \emph{special pairs} to \emph{special $s$-sets}, along with a specific common neighbourhood threshold $\alpha_s$, for each $s \in \{2,\dots,\beta\}$. We will also need bounds on the maximum number of edges in an unbalanced bipartite graph that excludes a complete bipartite subgraph. The Zarankiewicz number $z(a,b,s,t)$ is the maximum number of edges in a bipartite graph $G$ of parts $|A|=a$ and $|B|=b$ such that $G$ does not contain a copy of $K_{s,t}$ where the $s$ vertices are in $A$ and the $t$ vertices are in $B$. The celebrated Kővári-Sós-Turán theorem \cite{kHovari1954problem} provides a general upper bound for the Zarankiewicz numbers.

\begin{theorem}[Kővári, Sós, Turán, 1954]\label{thm:kst}
	For every $a \ge s \ge 2$ and $b \ge t \ge 2$,
	\[z(a,b,s,t) \le (t-1)^{1/s}(a-s+1)b^{1 - 1/s} + (s-1)b.\]
\end{theorem}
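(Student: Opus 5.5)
We prove the bound by double counting pairs $(S,w)$, where $S$ is an $s$-subset of $A$, $w\in B$, and $S\subseteq N(w)$; convexity then converts the count into an edge bound. Let $G$ be a bipartite graph with parts $|A|=a$ and $|B|=b$ containing no $K_{s,t}$ with the $s$ vertices in $A$ and the $t$ vertices in $B$. Let $e=|E(G)|$ and write $d_w\le a$ for the degree of $w\in B$, so that $\sum_{w\in B} d_w=e$.

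\emph{Counting.} Each vertex $w\in B$ contributes ${d_w\choose s}$ such pairs. Conversely, a fixed $s$-subset $S\subseteq A$ can lie in the neighbourhoods of at most $t-1$ vertices of $B$, since $t$ of them together with $S$ would form a forbidden $K_{s,t}$. Hence
\[\sum_{w\in B}{d_w\choose s}\le (t-1){a\choose s}.\]

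\emph{Normalising.} To obtain the factor $(a-s+1)$ rather than $a$, we compare the two binomials term by term. For $s-1\le d\le a$ we have
\[\frac{{d\choose s}}{{a\choose s}}=\prod_{i=0}^{s-1}\frac{d-i}{a-i}\ge\pth{\frac{d-s+1}{a-s+1}}^s,\]
because $(d-i)/(a-i)$ is non-increasing in $i$ when $d\le a$. When $d<s-1$, both sides are at most $0$ once we replace $d-s+1$ by its positive part $(d-s+1)_+$. Let $\varphi(x)\coloneqq \big((x-s+1)_+\big)^s$. Then
\[\sum_{w\in B}\varphi(d_w)\le (t-1)(a-s+1)^s.\]

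\emph{Jensen.} The function $\varphi$ is convex on $\mathbb{R}$ and non-decreasing. Jensen's inequality therefore gives
\[b\,\varphi(e/b)\le\sum_{w\in B}\varphi(d_w)\le (t-1)(a-s+1)^s.\]
If $e/b\le s-1$, then $e\le (s-1)b$ and the claimed bound holds trivially. Otherwise, taking $s$-th roots yields
\[\frac{e}{b}-s+1\le (t-1)^{1/s}(a-s+1)b^{-1/s},\]
that is, $e\le (t-1)^{1/s}(a-s+1)b^{1-1/s}+(s-1)b$, as claimed.

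The only delicate point is obtaining $(a-s+1)$ rather than the cruder $a$. This is handled by the monotone ratio comparison in the normalising step, together with using the positive part so that vertices of degree less than $s-1$ do not break convexity. The hypotheses $a\ge s$ and $b\ge t$ only guarantee that the quantities involved are well defined and positive.
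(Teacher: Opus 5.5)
Your proof is correct and gives exactly the stated bound. The three steps all check out: the double count $\sum_{w\in B}\binom{d_w}{s}\le (t-1)\binom{a}{s}$, the termwise comparison $\binom{d}{s}/\binom{a}{s}\ge \bigl((d-s+1)_+/(a-s+1)\bigr)^s$ for $0\le d\le a$, and Jensen applied to the convex function $\varphi$. Only $a\ge s$ is actually used, so $b\ge t$ is superfluous. The paper gives no proof of this theorem and simply cites Kővári, Sós and Turán; your argument is essentially their classical double-counting-plus-convexity proof.
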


We are now ready to prove \cref{thm:frugal-Kbt}, of which we recall the statement below.
\maintheorem*
\begin{proof}
	Let $\varepsilon \coloneqq \frac{1}{4\beta^2}$, $f \coloneqq \Delta^{\varepsilon}$, $\alpha_1 \coloneqq \Delta$, and for $s \in \{2,\dots,\beta\}$ let $\alpha_s \coloneqq \Delta^{\frac{\beta+1-s}{\beta} - \varepsilon}$. A set $S \subseteq V(G)$ of size $s \in \{2,\dots,\beta\}$ is called a \emph{special $s$-set} if $S$ is independent in $G$, has $|\coN_G(S)| > \alpha_s$ common neighbours, and does not contain a smaller special set. Let $\calH = (V(G), \calE)$ where $\calE$ contains $E(G)$, all special sets, and all $(\beta+1)$-subsets of vertices $S$ such that $S \subseteq N_G(v)$ for some $v \in V(G)$ and such that $S$ does not contain a smaller edge. $\calH$ has rank $\beta+1$ and maximum $(\beta+1)$-degree at most $\Delta_{\beta+1} \coloneqq \Delta^{\beta+1}$. Let $\Delta_* \coloneqq \Delta_{\beta+1}^{1/\beta} = \Delta^{1 + 1/\beta}$; we will verify at the end that $\Delta_\ell(\calH)^{1/(\ell - 1)} \le \Delta_*$ for all $2 \le \ell \le \beta$, and that $\Delta_*$ is therefore defined correctly as per the statement of \cref{thm:li-postle}.
	
	A proper colouring of $\calH$ is also proper for $G$ since $E(G) \subseteq \calE$, and is $\beta$-frugal since every $(\beta+1)$-subset $S \subseteq V(G)$ with a common neighbour contains an edge in $\calE$ and thus cannot be monochromatic. Therefore, $\chi_\beta(G) \le \chi(\calH)$.
	
	We first bound the maximum $(s,\beta+1)$-codegrees. Consider an integer $2\le s \le \beta$, and let $S$ be a $s$-subset of $V(G)$. If $S$ contains an edge in $\calE$, then no $(\beta+1)$-edge contains $S$ by definition and thus $S$ has $(\beta+1)$-codegree $0$. Otherwise, we have $|\coN_G(S)| \le \alpha_s$, in which case $S$ has $(\beta+1)$-codegree at most $\alpha_s \cdot {\Delta - s \choose \beta + 1 - s} \le \Delta^{(\beta + 1 - s)(1 + 1/\beta) - \varepsilon}$. We have thus shown that $\calH$ has maximum $(s,\beta+1)$-codegree $\Delta_{s,\beta+1}(\calH) \le \Delta_*^{\beta + 1 - s}/f$.
	
	We now bound the maximum $(s,\ell)$-codegrees for $\ell \le \beta$. Consider two integers $1 \le s < \ell \le \beta$, and let $S$ be a $s$-subset of $V(G)$. If $s=1$, then $|\coN_G(S)| \le \Delta = \alpha_1$. Otherwise if $s\ge 2$, either $S$ contains an edge in $\calE$, in which case $S$ has $\ell$-codegree $0$ and we are done, or $|\coN_G(S)| \le \alpha_s$. Thus we have $|\coN_G(S)| \le \alpha_s \le \Delta^{\frac{\beta + 1 - s}{\beta}}$. Let $\sigma_\ell(S)$ be the family of all $(\ell - s)$-subsets $S' \subseteq V(G)$ such that $S \cup S'$ is a special $\ell$-set. If $(s,\ell) \ne (1,2)$, the $\ell$-codegree of $S$ is precisely $|\sigma_\ell(S)|$, and in the remaining case $(s,\ell) = (1,2)$ we must additionally count the incident edges of $E(G)$, therefore $S$ has $\ell$-codegree at most $|\sigma_\ell(S)| + \Delta$. Observe that every set $S' \in \sigma_\ell(S)$ is disjoint from $\coN_G(S)$ since $S \cup S'$ must be independent in $G$, and additionally $\coN_G(S \cup S') \subseteq \coN_G(S)$. Consider the bipartite graph $H$ with parts $A \coloneqq \coN_G(S)$ and $B \coloneqq \sigma_\ell(S)$ and where two vertices $(w,S') \in A \times B$ are adjacent iff $w \in \coN_G(S \cup S')$. Since $S \cup S'$ forms a special $\ell$-set for every $S' \in \sigma_\ell(S)$, we have $\deg_H(S') > \alpha_\ell$. Therefore, the number of edges in $H$ is at least
	\begin{equation}\label{eq:H-lower2}
		|E(H)| > \alpha_\ell \cdot |\sigma_\ell(S)|.
	\end{equation}
	Let $t' \coloneqq {t - 1 \choose \ell - s} + 1 \le 2^t$. $H$ does not contain a copy of $K_{\beta,t'}$ where the $\beta$ vertices are in $\coN_G(S)$ and the $t'$ vertices are in $\sigma_\ell(S)$, otherwise we find a copy of $K_{\beta,t}$ in $G$ (any family of $t'$ subsets of size $\ell - s$ covers at least $t$ elements). Therefore, we apply \cref{thm:kst} with $a \coloneqq \alpha_s$ and $b \coloneqq |\sigma_\ell(S)|$ to deduce that the number of edges in $H$ is at most
	\begin{equation}\label{eq:H-upper2}
		|E(H)| < (t'-1)^{1/\beta} \alpha_s \cdot |\sigma_\ell(S)|^{1 - 1/\beta} + (\beta-1)|\sigma_\ell(S)|.
	\end{equation}

	Putting \cref{eq:H-lower2} and \cref{eq:H-upper2} together, we obtain the inequality
	\begin{align*}
		|\sigma_\ell(S)|^{1/\beta} < 2^{t/\beta} \frac{\alpha_s}{\alpha_\ell - \beta + 1} < 4^{t/\beta}\frac{\alpha_s}{\alpha_\ell}.
	\end{align*}
	Recall that $\alpha_s \le \Delta^{\frac{\beta + 1 - s}{\beta}}$ and $\alpha_\ell = \Delta^{\frac{\beta + 1 - s}{\beta} - \varepsilon}$ (because $\ell \ge 2$), therefore
	\begin{align*}
		|\sigma_\ell(S)| < 4^{t} \Delta^{(\beta + 1 - s) - (\beta + 1 - \ell - \beta\varepsilon)} = 4^t \Delta^{\ell - s + \frac{1}{4\beta}}.
	\end{align*}
	Therefore, $S$ has $\ell$-codegree at most $4^t\Delta^{\ell - s + \frac{1}{4\beta}} + \Delta \le \Delta_*^{\ell - s}/\Delta^{\frac{1}{2\beta}}$. Thus, we have shown that $\calH$ has maximum $(s,\ell)$-codegree $\Delta_{s,\ell}(\calH) \le \Delta_*^{\ell - s}/f$. In particular, for $\ell \in \{2,\dots,\beta\}$, we have shown that $\calH$ has maximum $\ell$-degree at most $\Delta_{\ell} \coloneqq \Delta^{\ell - 1 + \frac{1}{2\beta}} < \Delta_*^{\ell - 1}$, therefore we indeed have \[\max_{2\le \ell \le \beta+1}\Delta_\ell^{1/(\ell-1)} = \Delta_{\beta+1}^{1/\beta} = \Delta_*.\]
	Furthermore, the number of triangles incident to a given vertex is at most ${\Delta_2 \choose 2} < \Delta^{2 + \frac{1}{\beta}} < \Delta_*^2/f$. All the conditions of \cref{thm:li-postle} are met, therefore we conclude that
	\[\chi_\beta(G) \le \chi(\calH) = \bigO{\frac{\Delta_*}{(\log f)^{1/\beta}}} = \bigO{\frac{\Delta^{1 + 1/\beta}}{(\log \Delta)^{1/\beta}}}.\qedhere\]
\end{proof}

\section{Lower bound}

\newcommand{\bbG}{\mathbb{G}}
\newcommand{\pr}[1]{\mathbb{P}\left[#1\right]}
\newcommand{\esp}[1]{\mathbb{E}\left[#1\right]}

In this section, we prove \cref{thm:frugal-girth} by analysing the Erdős-Rényi random graph distribution $\bbG(n,p)$, where a graph $G \gets \bbG(n,p)$ has vertex set $[n]$, and every pair of vertices $\{u,v\}$ independently has probability $p$ of being an edge in $G$. The proofs in this section take loose inspiration from \cite{alon1991acyclic} and \cite{alon2002chromatic}.

We will use the following inequality; we include the proof for completeness.
\begin{lemma}\label{lem:ineq}
	Let $n \ge 1$ be an integer and $0 \le x \le 1$ . Then
	\[\log\pth{1 + x + \frac{x^2}{2!} + \dots + \frac{x^n}{n!}} \le x - \frac{x^{n+1}}{e(n+1)!}.\]
\end{lemma}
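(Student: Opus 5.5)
We plan to compare the truncated exponential with the full exponential series and control the tail. Write $T_n(x) \coloneqq \sum_{k=0}^{n} x^k/k!$ and $R_n(x) \coloneqq \sum_{k > n} x^k/k!$, so that $T_n(x) = e^x - R_n(x)$. Since $x \ge 0$, every term of the tail is nonnegative. Keeping only the first term gives $R_n(x) \ge x^{n+1}/(n+1)!$.

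Next we factor out $e^x$:
\[T_n(x) = e^x\pth{1 - e^{-x}R_n(x)} \le e^x\pth{1 - e^{-x}\frac{x^{n+1}}{(n+1)!}} \le e^x\pth{1 - \frac{x^{n+1}}{e(n+1)!}},\]
where the last step uses $x \le 1$, so that $e^{-x} \ge e^{-1}$. Set $y \coloneqq x^{n+1}/(e(n+1)!)$. Then $0 \le y \le 1/(e(n+1)!) < 1$, so the right-hand side is positive. Also $T_n(x) \ge 1 > 0$, so both logarithms are defined.

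Taking logarithms, which is monotone, gives $\log T_n(x) \le x + \log(1-y)$. Applying the elementary bound $\log(1-y) \le -y$ for $0 \le y < 1$ yields exactly $\log T_n(x) \le x - x^{n+1}/(e(n+1)!)$.

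There is no real obstacle here. The only point needing care is where the hypothesis $x \le 1$ is used, namely to replace $e^{-x}$ by the constant $e^{-1}$. Nonnegativity of $x$ is what allows the tail to be lower-bounded by its first term.
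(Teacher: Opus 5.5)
Your proof is correct, but it takes a different route from the paper. The paper uses calculus. It writes the derivative of $f(x)=\log T_n(x)$ as $f'(x)=1-\frac{x^n/n!}{T_n(x)}$, bounds $T_n(x)\le e^x\le e$ to get $f'(x)\le 1-\frac{x^n}{e\,n!}$, and integrates from $f(0)=0$.

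You avoid differentiation altogether. You start from the exact identity $\log T_n(x)=x+\log\bigl(1-e^{-x}R_n(x)\bigr)$. You then lose only through three estimates: $\log(1-z)\le -z$, the first-term bound $R_n(x)\ge x^{n+1}/(n+1)!$, and $e^{-x}\ge e^{-1}$.

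The two arguments are closer than they look. Since $\frac{d}{dx}\bigl(e^{-x}R_n(x)\bigr)=e^{-x}\frac{x^n}{n!}$ and $R_n(0)=0$, integrating the paper's derivative bound with $T_n(s)\le e^s$ instead of $T_n(s)\le e$ gives exactly $\log T_n(x)\le x-e^{-x}R_n(x)$. Your argument gives the same inequality if you keep the full tail $R_n(x)$ until the last step.

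What your version buys is this sharper intermediate inequality, valid for every $x\ge 0$, obtained without any calculus. In both proofs the hypothesis $x\le 1$ enters only to replace an exponential factor by the constant $e$. The paper's version, in exchange, is a compact derivative computation whose only estimate is $T_n\le e$ on $[0,1]$.
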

\begin{proof}
	The function $f(x) \coloneqq \log\pth{1 + x + \frac{x^2}{2!} + \dots + \frac{x^n}{n!}}$ defined over $[0,1]$ has derivative
	\begin{align*}
		f'(x) = \frac{1 + x + \frac{x^2}{2!} + \dots + \frac{x^{n-1}}{(n-1)!}}{1 + x + \frac{x^2}{2!} + \dots + \frac{x^n}{n!}} = 1 - \frac{\frac{x^n}{n!}}{1 + x + \frac{x^2}{2!} + \dots + \frac{x^n}{n!}} \le 1 - \frac{x^n}{en!},
	\end{align*}
	where we have used that $1 + x + \frac{x^2}{2!} + \dots + \frac{x^n}{n!} \le e^x \le e$ since $x \le 1$. We have $f(0) = 0$, therefore by integrating the previous inequality we obtain
	\[f(x) \le x - \frac{x^{n+1}}{e(n+1)!}.\qedhere\] 
\end{proof}

We first prove a lower bound on the $\beta$-frugal chromatic number of every ``large'' induced subgraph of $G \gets \bbG(n,p)$. This will allow us to subsequently remove vertices of large degree and small cycles while maintaining a large $\beta$-frugal chromatic number. For simplicity, we omit the floor function in the computations when it should technically be required; this does not affect the final result.

\begin{theorem}\label{thm:randomgraph}
	Fix $\beta \ge 1$, and fix $d$ sufficiently large. Let $n$ be a sufficiently large integer. Let $p \coloneqq \frac{d}{n}$ and sample $G \gets \bbG(n,p)$. With high probability, for every subset $U \subseteq V(G)$ of size at least $n/2$,
	\[\chi_\beta(G[U]) \ge k \coloneqq \pth{4^{\beta+5}(\beta+1)!}^{-\frac{1}{\beta}}\frac{d^{1 + 1/\beta}}{(\log d)^{1/\beta}}.\]
\end{theorem}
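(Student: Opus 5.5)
Assume, for a contradiction, that some $U$ with $|U| \ge n/2$ has a $\beta$-frugal colouring of $G[U]$ with fewer than $k$ colours. Some colour class $C$ then has size at least $|U|/k \ge n/(2k)$. We will show that, with high probability, every $m$-subset $C \subseteq V(G)$ with $m \coloneqq n/(2k)$ fails to be a valid colour class of a $\beta$-frugal colouring of any induced subgraph containing it. Taking a sub-class only helps: a subset of a valid class is still independent and still meets every neighbourhood in at most $\beta$ vertices. So it suffices to show that, with high probability, no $m$-set $C$ has both of the following properties.

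\begin{enumerate}[label=(\roman*)]
\item $C$ is independent in $G$.
\item Every vertex $v \in U$ has at most $\beta$ neighbours in $C$.
\end{enumerate}

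Since the frugality condition is only imposed on vertices of $U$, we use the vertices outside $C$ but inside $U$. There are at least $n/2 - m \ge n/4$ of them. The union bound must also range over $U$, which costs an extra factor of $2^n$. We handle this by requiring only that at least $n/4$ vertices outside $C$ have at most $\beta$ neighbours in $C$; there are at most $2^n$ choices for such a witnessing set $W$.

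For fixed disjoint $C$ and $W$ with $|C| = m$ and $|W| = n/4$, the events "$|N(v) \cap C| \le \beta$" for $v \in W$ are independent and independent of the edges inside $C$. Each has probability
\[\Pr[\mathrm{Bin}(m,p) \le \beta] \le e^{-mp}\sum_{j \le \beta}\frac{(mp)^j}{j!}\,(1-p)^{-\beta}.\]
Here $x \coloneqq mp = d/(2k) = c\,(\log d/d)^{1/\beta}$ is at most $1$. By \cref{lem:ineq} with $n=\beta$, this probability is at most
\[(1+o(1))\exp\pth{-\frac{x^{\beta+1}}{e(\beta+1)!}}.\]
Over the $n/4$ vertices of $W$ this gives
\[\exp\pth{-\frac{n}{4}\cdot\frac{x^{\beta+1}}{e(\beta+1)!}}.\]
The key identity is
\[n x^{\beta+1} = m\,d\,x^\beta = m\,d\cdot c^\beta\,\frac{\log d}{d} = c^\beta\, m\log d,\]
with $c^\beta = 2^{-\beta}(4^{\beta+5}(\beta+1)!)$ by the choice of $k$. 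The failure probability for fixed $(C,W)$ is therefore at most
\[\exp\pth{-\frac{c^\beta}{4e(\beta+1)!}\, m\log d}.\]
For simplicity we do not need to use independence of $C$ at all.

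The union bound is over at most
\[\binom{n}{m}\, 2^n \le \exp\pth{m\log(en/m) + n\log 2}\]
pairs. We have $n/m = 2k$, so $\log(en/m) = (1+1/\beta+o(1))\log d \le 2\log d$. Also $n\log 2 = 2km\log 2$, and this is the real obstacle: $k$ is polynomial in $d$, so the $2^n$ term dominates $m\log d$.

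The fix is to avoid choosing $U$ or $W$ freely. Instead, we take $W$ to be all vertices outside $C$ and count how many of them have more than $\beta$ neighbours in $C$. The set $U$ must avoid all such vertices, so there are at most $n/2$ of them. By a Chernoff bound on the number of "bad" vertices, this is a binomial with mean $\mu = n(1 - \Pr[\mathrm{Bin}(m,p)\le\beta])$. The estimate above gives $\Pr[\mathrm{Bin}(m,p)\le\beta] \le 1 - \Theta(x^{\beta+1})$, so $\mu \ge \Theta(x^{\beta+1})\,n$, which is small. Since we need the probability that the number of "good" vertices is at least $n/2 - m$, we apply the product bound differently: the probability that a fixed set of size $n/4$ is all good is as above, and the number of such sets is $\binom{n}{n/4}$, again exponential in $n$.

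So the honest bottleneck is comparing $n$ with $m\log d$. Since $n/m = 2k \gg \log d$, I would instead compute per vertex with the sharp bound: the expected number of good vertices is
\[n\exp(-\Theta(x^{\beta+1})).\]
I would ask for at least $n/2$ of them, and use
\[\Pr[\text{all of a fixed } n/2\text{-set good}] = \exp\pth{-\Theta(n x^{\beta+1})} = \exp\pth{-\Theta(c^\beta m\log d)}.\]
The count of $n/2$-sets should then be replaced by a Chernoff bound on the good count. Its deviation rate $\Theta(n x^{\beta+1})$ comes from the relative entropy of $\mathrm{Bin}(n,q)$ at $1/2$ with $q$ near $1$. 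This is where I expect the main difficulty: getting the constant $4^{\beta+5}$ to beat the $\binom{n}{m}$ entropy $2m\log d$ through careful Chernoff/entropy estimates.
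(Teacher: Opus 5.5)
There is a genuine gap, and it is not a matter of tuning constants. The intermediate statement you reduce to is false, so no Chernoff or entropy estimate can rescue it.

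A single colour class only forbids $o(n)$ vertices from lying in $U$. Concretely, w.h.p. at most $n/100$ vertices of $G$ have degree at least $10d$. The remaining vertices induce a graph of maximum degree below $10d$, so they contain an independent set $C$ of size $m=n/(2k)\ll n/(20d)$. The number of vertices with more than $\beta$ neighbours in $C$ is at most
\[\sum_{c\in C}\deg(c)/(\beta+1) < 10md/(\beta+1) = O(nd/k) = o(n).\]
Deleting these vertices leaves a set $U\supseteq C$ with $|U|\ge n/2$ for which both (i) and (ii) hold.

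Your own computation says the same thing. The number of bad vertices has mean $\Theta(nx^{\beta+1})=\Theta(m\log d)\ll n/2$, so ``at least $n/2-m$ good vertices'' is the typical event, not a large deviation. The $2^n$ cost of choosing $U$ that you flagged is therefore a symptom, not a technical obstacle. The best bound available for a fixed class is $\exp(-\Theta(m\log d))$, while $n\gg m\log d$.

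The missing idea is to exploit all colour classes simultaneously.
\begin{itemize}
\item The paper takes the union bound over entire $k$-colourings of $U$, giving vertices outside $U$ colour $k+1$. This costs $(k+1)^n\le\exp(2n\log d)$ and also absorbs the choice of $U$.
\item For a fixed colouring, it cuts the colour classes into $k$ disjoint monochromatic sets $V_1,\dots,V_k$ of size $t=n/(4k)$. Frugality forces every bipartite graph $G[V_i,V_j]$ to have maximum degree at most $\beta$.
\item These $\binom{k}{2}$ events depend on pairwise disjoint edge sets, so they are independent. Each has probability at most $\exp(-t^{\beta+2}p^{\beta+1}/(4(\beta+1)!))$, by exactly the binomial-tail estimate via \cref{lem:ineq} that you used.
\item Multiplying gives $\exp(-\Theta(k^2t(tp)^{\beta+1}))=\exp(-\Theta(nd^{\beta+1}/k^{\beta}))$. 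By the choice of $k$ this equals $\exp(-4n\log d)$, which beats the union bound.
\end{itemize}
Compared with your single-class exponent $\Theta(nx^{\beta+1})$, this gains precisely a factor of order $k$, turning $\Theta(m\log d)$ into $\Theta(n\log d)$.
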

\begin{proof}
	Consider any fixed subset $U$ of size at least $\frac{n}{2}$, and fix any $k$-colouring $\sigma$ of $U$; there are at most $(k+1)^n \le \exp\pth{2n\log d}$ ways of choosing such a set $U$ and a $k$-colouring, by considering vertices in $V(G) \setminus U$ to have colour $k+1$. We claim that $\sigma$ is $\beta$-frugal in $G[U]$ with probability at most $\exp\pth{-4n \log d}$, thus by union bound over all choices of $U$ and $\sigma$, the probability that there exists a subset $U\subseteq V(G)$ of size $|U| \ge \frac{n}{2}$ such that $\chi_\beta(G[U]) < k$ is at most $\exp(-2 n \log d)$.
	
	Let $t \coloneqq \frac{n}{4k}$. We can easily check that $\frac{1}{d} < tp < 1$ using the assumption that $d$ is sufficiently large. Let $U_1 \sqcup \dots \sqcup U_k = U$ be the colour classes of $\sigma$. For each $i \in [k]$ we further subdivide $U_i$ into subsets of size exactly $t$ and discard the remaining vertices. At most $t-1$ vertices are discarded for each colour class, thus we have at least $|U| - k(t-1) \ge \frac{n}{4}$ vertices remaining, each belonging to a monochromatic subset of size exactly $t$; there at least $\frac{n}{4t} \ge k$ such subsets, let us only consider the first $k$ of them, which we call $V_1 \dots, V_k$ (their precise colours won't matter anymore).
	
	For $1 \le i < j \le k$, let $E_{i,j}$ be the event ``$G[V_i,V_j]$ has maximum degree at most $\beta$''. These events are mutually independent since they are determined by disjoint sets of edges, and all of these events must occur in order for $\sigma$ to be $\beta$-frugal.
	
	\begin{claim}\label{claim:proba}
		For all $1 \le i < j \le k$, \[\pr{E_{i,j}} \le \exp\pth{-\frac{ t^{\beta+2}p^{\beta+1}}{4(\beta+1)!}}.\]
	\end{claim}
	\begin{subproof}
		Let $H \coloneqq G[V_i, V_j]$ be the bipartite graph induced between $V_i$ and $V_j$. For $v \in V_i$, let $D_v$ be the event ``$\deg_H(v) \le \beta$''. The events $D_v$ are independent, and $E_{i,j} \subseteq \bigcap\limits_{v \in V_i} D_v$. For $v \in V_i$, we have
		\[\pr{D_v} = \sum_{i = 0}^{\beta} {t \choose i}(1-p)^{t - i}p^i \le (1 - p)^{t - \beta}\pth{1 + tp + \frac{(tp)^2}{2!} + \dots + \frac{(tp)^{\beta}}{\beta!}}.\]
		Recall that $\frac{1}{d} < tp < 1$. By \cref{lem:ineq} and the inequality $1 - x \le e^{-x}$, we have
		\begin{align*}
			\pr{D_v} &\le \exp\pth{-p(t - \beta) + tp - \frac{(tp)^{\beta+1}}{e(\beta+1)!}}\\
				&\le \exp\pth{\beta p - \frac{(tp)^{\beta+1}}{e(\beta+1)!}}\\
				&\le \exp\pth{-\frac{(tp)^{\beta+1}}{4(\beta+1)!}},
		\end{align*}
		We end the proof of \cref{claim:proba} by multiplying the probability of each $D_{v}$ for $v \in V_i$.
	\end{subproof}
	We now conclude the proof of \cref{thm:randomgraph}. The probability that $\sigma$ is $\beta$-frugal in $G[U]$ is
	\begin{align*}
		\pr{\sigma \textrm{ is $\beta$-frugal in }G[U]} &\le \prod_{1\le i < j \le k}\pr{E_{i,j}}\\
		&\le \exp\pth{-\frac{t^{\beta+2}p^{\beta+1}}{4(\beta+1)!}}^{k \choose 2}\\
		&\le \exp\pth{-\frac{1}{4(\beta+1)!}\pth{\frac{n}{4k}}^{\beta+2}p^{\beta+1}\cdot \frac{k^2}{4}}\\
		&\le \exp\pth{-\frac{1}{4^{\beta+4}(\beta+1)!}n^{\beta + 2} p^{\beta + 1} k^{-\beta}}\\
		&\le \exp\pth{- 4 n \log d}.
	\end{align*}
	Thus, the desired conclusion follows by union bound over all possible choices of $U$ and $\sigma$.
\end{proof}

We can now prove \cref{thm:frugal-girth}, of which we recall the statement below.

\girth*

\newcommand{\bD}{\mathbf{D}}
\newcommand{\bC}{\mathbf{C}}
\begin{proof}
	Fix $d$ and $g$, and let $n$ be sufficiently large. Let $p \coloneqq \frac{d}{n}$ and sample $G \gets \bbG(n,p)$.
	
	Let $\bD$ be the random variable that counts the number of vertices of degree at least $10d$ in $G$. The probability that a vertex $v \in V(G)$ has degree at least $10d$ is at most ${n \choose 10d}p^{10d} \le e^{-10d}$, using the inequality ${n \choose k} \le \pth{\frac{en}{k}}^k$, therefore $\esp{\bD} \le \frac{n}{100}$.
	
	For $\ell \in \{3,\dots,g\}$, let $\bC_\ell$ be the random variable that counts the number of $\ell$-cycles in $G$, and $\bC \coloneqq \bC_3 + \dots + \bC_g$. For $\ell \in \{3,\dots,g\}$, we have $\esp{\bC_\ell} < n^\ell \cdot p^\ell = d^\ell$, therefore $\esp{\bC} < d^3 + d^4 + \dots + d^g$. For $n \gg d^{g}$, we have $\esp{\bC} \le \frac{n}{100}$.
	
	Let $c \coloneqq (4^{\beta + 5}(\beta+1)!)^{-\frac{1}{\beta}}$. We have
	\begin{itemize}
		\item $\pr{\bD \ge \frac{n}{10}} \le \frac{1}{10}$ and $\pr{\bC \ge \frac{n}{10}} \le \frac{1}{10}$, using Markov's inequality, and
		\item by \cref{thm:randomgraph}, w.h.p., every subset $U \subseteq V(G)$ of size at least $\frac{n}{2}$ satisfies $\chi_\beta(G[U]) \ge c \frac{d^{1 + 1/\beta}}{(\log d)^{1/\beta}}$.
	\end{itemize}

	Thus, both conditions are fulfilled simultaneously with nonzero probability; we fix such a realisation $G$, remove all vertices of degree at least $10d$ and remove one vertex per cycle of length less than $g$. We obtain a subset $U \subseteq V(G)$ of size at least $n - 2\frac{n}{10} \ge \frac{n}{2}$ such that
	\begin{itemize}
		\item $G[U]$ has maximum degree $\Delta \le 10d$, girth at least $g$, and
		\item $\chi_\beta(G[U]) \ge c \frac{d^{1 + 1/\beta}}{(\log d)^{1/\beta}}$.
	\end{itemize}
	Therefore, the conclusion of \cref{thm:frugal-girth} holds by taking $c_\beta \coloneqq \frac{1}{100}(4^{\beta + 5}(\beta+1)!)^{-\frac{1}{\beta}}$.
\end{proof}

\section{Concluding remarks}

\newcommand{\calF}{\mathcal{F}}

The colourings considered in this work belong to a wider family of colourings with bichromatic constraints. Given a (fixed) family of connected bipartite graphs $\calF$, a $(2,\calF)$-avoiding colouring of a graph $G$ is a proper colouring such that $G$ does not contain a $2$-coloured copy of $H \in \calF$ as a subgraph. When $\calF$ consists of a single graph $H$, we will refer to $(2,H)$-avoiding colourings. $\beta$-frugal colourings are precisely $(2,K_{1,\beta + 1})$-avoiding colourings. $(2,P_4)$-avoiding colourings are known as star colourings (see e.g. \cite{fertin2004star}) and $(2,\{C_{2\ell} : \ell \ge 2\})$-avoiding colourings are known as acyclic colourings (see e.g. \cite{alon1991acyclic}). Let $\chi_{2,\calF}(G)$ denote the minimum number of colours needed for a $(2,\calF)$-avoiding colouring of $G$, and let $\chi_{2,\calF}(\Delta)$ be the maximum $\chi_{2,\calF}(G)$ over all graphs $G$ of maximum degree $\Delta$. Aravind and Subramanian \cite{aravind2013forbidden} obtained almost tight bounds on $\chi_{2,\calF}(\Delta)$ for all families $\calF$ of connected bipartite graphs.

\begin{theorem}[Aravind, Subramanian, 2013]\label{thm:F-avoiding}
	Let $\calF$ be a fixed family of connected bipartite graphs, and let $m \ge 2$ be the smallest number of edges of a graph $H \in \calF$. Then
	\[\bigOmega{\frac{\Delta^{1 + \frac{1}{m-1}}}{(\log \Delta)^{\frac{1}{m-1}}}} \le \chi_{2,\calF}(\Delta) \le \bigO{\Delta^{1 + \frac{1}{m-1}}}.\]
\end{theorem}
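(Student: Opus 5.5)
We treat the two bounds separately. Throughout, fix $H_0 \in \calF$ with exactly $m$ edges and $v_0 \le m+1$ vertices. Any $(2,\calF)$-avoiding colouring must in particular avoid $2$-coloured copies of $H_0$, which will drive the lower bound. Conversely, avoiding $2$-coloured copies of every $H\in\calF$ is implied by avoiding them for suitable subgraph-closed witnesses, which will drive the upper bound.

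\emph{Upper bound.} The plan is a Lov\'asz Local Lemma (or entropy compression) argument on a uniformly random $k$-colouring with $k = C\Delta^{1+1/(m-1)}$. It is modelled on the derivation of \cref{eq:hypergraph} and on the Alon--McDiarmid--Reed treatment of acyclic colouring.
\begin{itemize}
\item The first bad events are monochromatic edges.
\item If $H_0$ is a tree, the second bad events are $2$-coloured copies of $H_0$. A copy has probability at most $k^{-(m-1)}$, and each vertex lies in at most $v_0\Delta^{m}$ copies. The symmetric Local Lemma then closes exactly when $k^{m-1} \gtrsim \Delta^{m}$, that is, $k = \Theta(\Delta^{1+1/(m-1)})$.
\item If $H_0$ contains a cycle, the naive count fails. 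A copy has $v_0-2 < m-1$ free colours against $\Delta^{v_0-1}$ copies per vertex. Here I will add a third family of bad events: ``some vertex has more than $\beta_0$ neighbours of one colour'', for a constant $\beta_0$ depending on $H_0$, exactly as in the frugal setting. Conditioned on $\beta_0$-frugality, the number of $2$-coloured copies through a vertex is controlled edge by edge: a fixed vertex and colour leave at most $\beta_0$ continuations.
\end{itemize}
In the cyclic case I would count, for each copy, a $2$-coloured spanning structure of $H_0$ whose dependency degree is $\Delta^{m}$ and whose probability is $k^{-(m-1)}$, after merging the frugality events into the dependency graph. This bookkeeping is the step I expect to be the main obstacle. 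The first thing to try is to grow each copy along a spanning tree, and then to charge each remaining cycle-closing edge to the frugality bound instead of a factor of $\Delta$.

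\emph{Lower bound.} I follow the proof of \cref{thm:randomgraph} and \cref{thm:frugal-girth} with $G \gets \bbG(n,d/n)$, putting $k \coloneqq c\, d^{1+1/(m-1)}/(\log d)^{1/(m-1)}$.
\begin{itemize}
\item Fix $U$ of size at least $n/2$ and a $k$-colouring of $U$. Split the colour classes into monochromatic blocks $V_1,\dots,V_k$ of size $t = n/(4k)$.
\item For $i<j$, a $(2,\calF)$-avoiding colouring forces the event that $G[V_i,V_j]$ contains no copy of $H_0$ with one part in $V_i$ and the other in $V_j$. These events are independent over pairs.
\item Janson's inequality bounds each event by $\exp(-\Omega(\mu))$, where $\mu \asymp t^{v_0}p^{m}$ is the expected number of copies. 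We must check that Janson's correlation term is $O(\mu)$ in the regime $tp \le 1$. For trees this is straightforward, as in \cref{claim:proba}; for cyclic $H_0$ we may need to choose $t$ so that $tp$ is a suitable power and redo the optimisation.
\item Multiplying over the $\binom{k}{2}$ pairs gives roughly $\exp\bigl(-\Omega(k^2 t^{m+1}p^{m})\bigr)=\exp\bigl(-\Omega(n^{m+1}p^{m}k^{1-m})\bigr)$ for trees. By the choice of $k$ this is at most $\exp(-4n\log d)$, which beats the union bound over $\exp(2n\log d)$ pairs $(U,\sigma)$.
\end{itemize}
Finally, deleting high-degree vertices (and short cycles, if desired) exactly as in the proof of \cref{thm:frugal-girth} yields a graph of maximum degree $\Delta \le 10d$. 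This graph still needs $\Omega\bigl(\Delta^{1+1/(m-1)}/(\log\Delta)^{1/(m-1)}\bigr)$ colours.
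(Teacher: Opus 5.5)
The paper does not prove this statement; it is quoted from Aravind and Subramanian, so I assess your plan on its own. When the edge-minimal $H_0$ is a tree the plan works, and it is the adaptation the paper mentions in its concluding remarks. Two details need adjusting in that case. The upper bound needs the asymmetric Local Lemma, because the edge events have probability $1/k$. In Janson's inequality, an overlap in a subforest with $e_F$ edges and $c$ components contributes $O(\mu\,(tp)^{m-e_F}t^{1-c})=O(\mu)$. However, the statement allows $H_0$ to contain a cycle (e.g.\ $\mathcal{F}=\{C_4\}$, or acyclic colouring, with $m=4$). In that case both halves have genuine gaps.

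\emph{Upper bound.} Since $v_0\le m$, every $2$-coloured spanning structure of a copy has probability $k^{-(v_0-2)}\ge k^{-(m-2)}$. So the structure of probability $k^{-(m-1)}$ that you plan to count does not exist. Frugality events cannot substitute for it, for two reasons. Local Lemma dependencies are fixed before colouring, so you cannot condition on frugality. And a deterministic bound on the number of $2$-coloured copies under frugality does not make that number zero.

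Concretely, take $K_{\Delta,\Delta}$ with $H_0=C_4$ and $k=C\Delta^{4/3}$. A $2$-coloured $C_4$ needs only one monochromatic pair on each side, which is compatible with $\beta_0$-frugality for every $\beta_0\ge 2$. Each vertex lies in $\Theta(\Delta^3)$ four-cycles, each $2$-coloured with probability $k^{-2}$, so the local sum is of order $\Delta^{1/3}$.

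The missing idea is the special-set device from the proof of \cref{thm:frugal-Kbt}.
\begin{itemize}
\item For $2\le s<v_0$, call an $s$-set $S$ special if $|N^{\cap}_G(S)|>\Delta^{(m-s)/(m-1)}$. Add the bad event that $S$ is monochromatic, which has probability $k^{-(s-1)}$.
\item Double counting pairs $(S,w)$ with $w\in N^{\cap}_G(S)$ shows that fewer than $\Delta^{(s-1)m/(m-1)}$ special sets contain a given vertex.
\item The remaining bad events are $2$-coloured copies all of whose same-side sets are non-special.
\item Embed these copies along an order in which $w_i$ has $d_i\ge 1$ earlier neighbours. This gives at most $v_0\prod_i\Delta^{(m-d_i)/(m-1)}=v_0\Delta^{(v_0-2)m/(m-1)}$ such copies per vertex, using $\sum_i d_i=m$. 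This count is balanced against probability $k^{-(v_0-2)}$.
\end{itemize}
So each cycle-closing vertex is charged to non-specialness, not to frugality. For a general family $\mathcal{F}$, note that every connected graph with at least $m$ edges contains a connected subgraph with exactly $m$ edges. Hence finitely many event types suffice.

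\emph{Lower bound.} The sparse model fails for cyclic $H_0$ whatever $t$ you choose. Since $v_0\le m$, the graph $\mathbb{G}(n,d/n)$ has $O(d^m n^{v_0-m})=O(1)$ copies of $H_0$ in expectation. Your exponent satisfies $k^2t^{v_0}p^m\le k^{2-v_0}d^mn^{v_0-m}$, which cannot beat the $e^{2n\log d}$ union bound. Moreover, with probability bounded away from $0$ the graph has no cycle of length at most $v_0$. Then every proper colouring is $(2,H_0)$-avoiding, so the w.h.p.\ claim is false. Deleting short cycles destroys every copy outright.

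You need dense random graphs instead.
\begin{itemize}
\item Take $p=(C\log n/n)^{1/m}$ and blocks of a fixed size $s\ge v_0$.
\item A colouring with at most $n/(2s)$ colours leaves at least $n/(2s)$ disjoint monochromatic blocks.
\item Each pair of blocks independently contains a prescribed copy of $H_0$ with probability $p^m$. Such a copy is either $2$-coloured or improperly coloured.
\item So the colouring survives with probability at most $(1-p^m)^{\binom{n/(2s)}{2}}\le\exp(-Cn\log n/(9s^2))$. This beats the at most $n^n$ colourings once $C>9s^2$.
\item Since $\Delta\le 2np$ w.h.p., this yields $\chi_{2,H_0}(G)>n/(2s)=\Omega(\Delta^{m/(m-1)}/(\log\Delta)^{1/(m-1)})$, necessarily without any girth control.
\end{itemize}
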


By \cref{thm:hmr}, we know that the upper bound in \cref{thm:F-avoiding} is tight when $\calF = \{K_{1,\beta+1}\}$, i.e. $\chi_{2,K_{1,\beta+1}}(\Delta) = \bigTheta{\Delta^{1 + 1/\beta}}$. For all bipartite connected graphs $H$ other than stars, the precise order of magnitude of $\chi_{2,H}(\Delta)$ is unknown; the lower bound in \cref{thm:F-avoiding} is obtained by analysing random graphs. Does there exist a deterministic construction for $(2,H)$-avoiding colourings that would certify the tightness of the upper bound in \cref{thm:F-avoiding}?

\begin{problem}\label{prob:B}
	Let $H$ be a fixed bipartite graph with $m \ge 2$ edges. Is it true that \[\chi_{2,H}(\Delta) = \bigTheta{\Delta^{1 + \frac{1}{m-1}}}?\]
\end{problem}

The second problem we ask on this topic, related to the present work, is the possibility of reducing the number of colours when considering sufficiently sparse graphs. We propose girth as the simplest measure of sparsity to consider, but excluding a fixed subgraph would also be of interest. 

\begin{problem}\label{prob:C}
	Let $H$ be a fixed tree with $m \ge 2$ edges, and $g$ sufficiently large. Let $G$ be a graph of maximum degree $\Delta$ and girth at least $g$. Is it true that
	\[\chi_{2,H}(G) = o\pth{\Delta^{1 + \frac{1}{m-1}}}?\]
\end{problem}

We believe that the lower bound in \cref{thm:F-avoiding} is the correct estimate for sufficiently sparse graphs. Indeed, the proof of \cref{thm:randomgraph} can be adapted to $(2,H)$-avoiding colourings, and for every fixed tree $H$ with $m \ge 2$ edges, it is possible to show that there exist graphs $G$ of arbitrarily large maximum degree $\Delta$ and girth such that
\[\chi_{2,H}(G) = \bigOmega{\frac{\Delta^{1 + \frac{1}{m-1}}}{(\log \Delta)^{\frac{1}{m-1}}}}.\]

For both \cref{prob:B} and \cref{prob:C}, a positive answer for star colourings specifically ($H = P_4$) would already be an interesting result.

\section*{Acknowledgments} The author thanks François Pirot for helpful discussions and suggestions.

\section*{Note}
Shortly after this work appeared online, we were kindly informed by Ross Kang that an independent work of a similar nature had been done in the Master's thesis of Floor Beks \cite{beks2025colouring}, where it was in particular proved that the upper bound in \cref{thm:frugal-cycle} holds in the special case $\beta=2$ with the stronger requirement that $G$ has girth $7$.	

\bibliographystyle{alpha}
\bibliography{references}

\end{document}